\numberwithin{equation}{section}
\newtheorem{theorem}[equation]{Theorem}
\newtheorem{lemma}[equation]{Lemma}
\newtheorem{corollary}[equation]{Corollary}
\newtheorem{proposition}[equation]{Proposition}
\newtheorem{example}[equation]{Example}
\newtheorem{remark}[equation]{Remark}
\def\beq{\begin{eqnarray*}}
\def\eeq{\end{eqnarray*}}
\def\bt{\begin{theorem}}
\def\et{\end{theorem}}
\def\bp{\begin{proposition}}
\def\ep{\end{proposition}}
\def\bl{\begin{lemma}}
\def\el{\end{lemma}}
\def\bp{{\bar\partial}}
\def\HH{{\mathcal H}}
\def\RR{\mathcal{R}}
\def\XX{{\mathcal{X}}}
\def\span{\text{\rm span}}
\def\dist{\text{\rm {dist}}}
\def\supp{\text{\rm supp}\,}
\def\dist{\rm dist}
\def\UK{{\mathcal U\mathcal K}}
\def\Z{{\mathbf{Z}}}
\def\R{{\mathbb R}}
\title{{On Asymptotically Orthonormal Sequences }}
\author[Fricain]{Emmanuel Fricain}
\address{Laboratoire Paul Painlev\'e, Universit\'e Lille 1, 59 655 Villeneuve d'Ascq C\'edex }
\email{emmanuel.fricain@math.univ-lille1.fr}
\author[Rupam]{Rishika Rupam}
\address{Laboratoire Paul Painlev\'e, Universit\'e Lille 1, 59 655 Villeneuve d'Ascq C\'edex}
\email{rishika.rupam@math.univ-lille1.fr}
\thanks{The authors were supported by Labex CEMPI (ANR-11-LABX-0007-01)}
\keywords{Asymptotically orthormal sequences, de Branges--Rovnyak and model spaces, reproducing kernels.}
\subjclass[2010]{30J05, 30H10, 46E22}
\begin{document}

\begin{abstract}
An asymptotically orthonormal sequence is a sequence which is "nearly" orthonormal in the sense that it satisfies the Parseval equality up to two constants close to one. In this paper, we explore such sequences formed by normalized reproducing kernels of model spaces and de Branges--Rovnyak spaces.
\end{abstract}

\maketitle


\section{Introduction}
When working in Hilbert spaces, it is very natural and useful to deal with orthonormal basis. However, in many situations, the system we are interested in does not form an orthonormal basis but is close to one. The investigation of such bases has a long history.  It started with the works of Paley--Wiener \cite{paley1934fourier} and Levinson \cite{levinson1940gap} mainly for exponential systems. In this context, functional models have been used in \cite{hruvsvcev1981unconditional} together with some other tools from operator theory. The model spaces $K_\Theta$ of the unit disc are subspaces of the Hardy space $H^2(\mathbb{D})$ invariant under the adjoints of multiplications. Their theory is connected to dilation theory for contractions on Hilbert spaces. The paper \cite{hruvsvcev1981unconditional} has inspired a fruitful line of research on geometric properties of systems formed by reproducing kernels of $K_\Theta$. Not only it enabled to recapture all the classical results on exponential systems but also it gave many new results in a more general context. 

In \cite{chalendar2003functional}, along the line of research of \cite{hruvsvcev1981unconditional}, the authors studied the case when the system of normalized reproducing kernels 
$(\kappa_{\lambda_n}^\Theta)_n$ of $K_\Theta$ is asymptotically close to an orthonormal basis (see definition below). This is a particular case of unconditional basis where more rigidity is required. It should be noted that in \cite{hruvsvcev1981unconditional} and \cite{chalendar2003functional}, the additional assumption 
\begin{equation}\label{eq:additional-intro-hruvsvcev1981unconditional}
\sup_{n\geq 1}|\Theta(\lambda_n)|<1
\end{equation}
is required. Under that assumption, the projection method developed in \cite{hruvsvcev1981unconditional} and used in \cite{chalendar2003functional} was to link the properties of $(\kappa_{\lambda_n}^\Theta)_n$ to those of normalized reproducing kernels $(\kappa_{\lambda_n})_n$ of $H^2(\mathbb D)$. Volberg proved in \cite{volberg1982two} that $(\kappa_{\lambda_n})_n$ is an asymptotically orthonormal basis of its closed span if and only if $(\lambda_n)_n$ is a thin sequence (a stronger condition than Carleson's condition). This beautiful result was recently reproved by Gorkin--McCarthy--Pott--Wick \cite{gorkin2014thin} by a direct and easier method using ideas from interpolation theory.

Following the work of Baranov \cite{baranov2005stability} for Riesz basis, we are interested in this paper to investigate asymptotically orthonormal basis of reproducing kernels of $K_\Theta$ without requiring assumption \eqref{eq:additional-intro-hruvsvcev1981unconditional}. In that situation, the projection method does no longer applies and the basic tool here will be Bernstein's type inequalities. We also work in the more general context where model spaces $K_\Theta$ are replaced by de Branges--Rovnyak spaces $\HH(b)$. We should mention that we work in this paper in the upper--half plane but most results transfer easily in the unit disc. 

The plan of the paper is the following. The next section contains preliminary material ; in particular, an analogue of Bari's theorem is given, which completes a result given in \cite{chalendar2003functional}. In Section 3, we study the stability of asymptotically orthonormal sequences with respect to perturbation of  frequencies. The main results of the paper are Theorem~\ref{prop1}, Corollary~\ref{cor1-perturbation}, Theorem~\ref{Thm:stability-real-frequencies} and Corollary~\ref{Cor:MIF-CLS-Stability-result}. In Section 4, we study the case of exponential systems. Finally, in the last section, we examine what happens when one projects an AOB 
$(\kappa_{\lambda_n}^{b_1})_{n\geq 1}$ on a subspace $\HH(b_2)$ of $\HH(b_1)$.
\\ \\
\noindent \textbf{Acknowledgement} The authors would like to thank Pascal Thomas for useful discussions 
and suggesting the problem discussed in Section 5.

\section{Preliminaries}
\subsection{Asymptotically orthonormal sequences}
Let $\HH$ be a Hilbert space, $\XX=(x_n)_{n\geq 1}$ be a sequence of vectors in $\HH$. We recall that $\XX$ is said to be:
\begin{enumerate}
\item[(a)] \emph{minimal} if for every $n\geq 1$, 
$$
x_n\not\in\span(x_\ell:\ell\not=n),
$$
where $\span(\dots)$ denotes the closure of the finite linear combination of $(\dots)$. 
\item[(b)] a \emph{Riesz sequence} (abbreviated RS) if there exists two constants $c,C>0$ such that 
$$ 
c\sum_{n\geq 1}|a_n|^2\leq \|\sum_{n\geq 1}a_n x_n\|^2_{\HH}\leq C\sum_{n\geq 1}|a_n|^2,
$$
for every finitely supported complex sequence $(a_n)_n$;
\item[(c)] an \emph{asymptotically orthonormal sequence} (abbreviated AOS) if there exists $N_0\in\mathbb{N}$ such that for all $N\geq N_0$ there are constants $c_N,C_N>0$ verifying
\begin{equation}\label{eq:AOB}
c_N\sum_{n\geq N}|a_n|^2\leq \|\sum_{n\geq N}a_n x_n\|^2_{\HH}\leq C_N\sum_{n\geq N}|a_n|^2,
\end{equation}
for every finitely supported complex sequence $(a_n)_n$ and $\lim_{N\to \infty}c_N=1=\lim_{N\to \infty}C_N$;
\item[(d)] an \emph{asymptotically orthonormal basic sequence} (abbreviated AOB) if it is an AOS with $N_0=1$; 
\item[(e)] a \emph{Riesz basis} (abbreviated RB) if it is a complete Riesz sequence, that is a Riesz sequence satisfying 
$$
\span(x_n:n\geq 1)=\HH.
$$
 \end{enumerate}
It is easy to see that $(x_n)_{n\geq 1}$ is an AOB if and only if it is an AOS as well as a RS. Also, $(x_n)_{n\geq 1}$ is an AOB if and only if it is minimal and an AOS. The well-known result of K\"othe--Toeplitz (\cite[page 136]{nikolski1986treatise}) says that if $\XX=(x_n)_{n\geq 1}$ is a complete and minimal sequence of normalized  vectors in $\HH$, then $\XX$ is a Riesz basis if and only if $\XX$ is an unconditional basis. The reader should pay attention to the fact that AOB does not imply completeness; an AOB is a basis in its span but not necessarily in the whole space.

We recall also that for a sequence $\XX=(x_n)_{n\geq 1}$, the \emph{Gram matrix} $\Gamma_{\XX}=(\Gamma_{n,p})_{n,p\geq 1}$ is defined by 
$$
\Gamma_{n,p}=\langle x_n,x_p \rangle_{\HH},\qquad (n,p\geq 1).
$$
If $\XX=(x_n)_{n\geq 1}$ is a complete and minimal sequence and $\XX^*=(x_n^*)_{n\geq 1}$ is its biorthogonal, that is the unique sequence $(x_n^*)_{n\geq 1}$ in $\HH$ satisfying 
$$
\langle x_\ell,x_n^* \rangle_\HH=\delta_{n,\ell},
$$
the \emph{interpolation operator} $J_{\mathcal X}$ is defined as 
$$
J_{\mathcal X}x=(\langle f,x_n^*\rangle_{\HH})_{n\geq 1},\qquad (x\in\HH).
$$
We refer the reader to \cite{nikolski1986treatise}, \cite{fricain2014theory} or \cite{young2001introduction} for details on general geometric properties of sequences ib an Hilbert space.  

Bari's theorem (see \cite[page 132]{nikolski1986treatise}) gives several caracterizations for a sequence to be a RB in terms of its Gram matrix and interpolation operator. An analogue of Bari's result for AOB is also available. A part of this can be found in \cite{chalendar2003functional}. To complete the picture, we need two preliminaries results. First we introduce a notation. Let $T\in\mathcal L(\HH_1,\HH_2)$. We say that $T\in \UK(\HH_1,\HH_2)$ if $T$ is invertible from $\HH_1$ onto $\HH_2$, $T=U+K$ where $U,K\in \mathcal L(\HH_1,\HH_2)$ and $U$ is unitary and $K$ is compact.

\bl\label{Lem:unitary-compact}
Let $\HH_1,\HH_2,\HH_3$ be Hilbert spaces. The following hold:
\begin{enumerate}
\item[\rm{(a)}] if $T_1\in\UK(\HH_1,\HH_2)$ and $T_2\in\UK(\HH_2,\HH_3)$, then $T_2T_1\in\UK(\HH_1,\HH_3)$;
\item[\rm{(b)}] if $T\in\UK(\HH_1,\HH_2)$, then $T^{-1}\in\UK(\HH_2,\HH_1)$; 
\item[\rm{(c)}] if $T\in\UK(\HH_1,\HH_2)$, then $T^*\in\UK(\HH_2,\HH_1)$.
\end{enumerate}
\el
\begin{proof}
The proofs of (a) and (c) are straightforward and are left to the reader. Let us prove (b). Assume that $T=U+K$ is invertible with $U$ unitary and $K$ compact. Then, write $T=U(I+U^*K)=UV$ with $V=I+U^*K$. It is clear that $V$ is invertible and $I=V^{-1}+V^{-1}U^*K$. Hence $V^{-1}=I-V^{-1}U^*K$. We then get
$$
T^{-1}=V^{-1}U^*=U^*-V^{-1}U^*K U^*,
$$
which means that $T^{-1}\in\UK(\HH_2,\HH_1)$.
\end{proof}

\bl\label{Lem:NCBAO-Interp}
Let $\XX=(x_n)_{n\geq 1}$ be a complete AOB in $\HH$ and let $C_N$ be the constant appearing in the right inequality of \eqref{eq:AOB}. Then for every $N\geq 1$ and $f\in\HH$, we have
$$
\sum_{n\geq N}|\langle f,x_n\rangle_\HH|^2\leq C_N \|f\|_\HH^2.
$$
\el 

\begin{proof}
Let us denote by $P_N:\ell^2\rightarrow \ell^2$ the orthogonal projection defined by
$$
P_N\left(\sum_{n\geq 1}a_n e_n\right)=\sum_{n\geq N}a_n e_n,
$$
where $(e_n)_{n\geq 1}$ is the canonical orthonormal basis of $\ell^2$. For any $a=(a_n)_{n\geq 1}\in\ell^2$, define 
$$
V_{\mathcal X}a=\sum_{n\geq 1}a_n x_n.
$$
Since $\XX$ is in particular a Riesz basis, this map $V_{\XX}$ defines a continuous invertible operator from $\ell^2$ onto $\HH$. Moreover, for $a\in\ell^2$, we have
$$
\|V_\XX P_N a\|_\HH^2=\|\sum_{n\geq N}a_n x_n\|_\HH^2\leq C_N \sum_{n\geq N}|a_n|^2\leq C_N \|a\|_{\ell^2}^2,
$$
which gives that $\|P_N V_\XX^*\|=\|V_\XX P_N\|\leq \sqrt{C_N}$. But is is easy to see that $V_\XX^*=J_{\XX^*}$, whence $\|P_N J_{\XX^*}\|\leq \sqrt{C_N}$, which gives the desired inequality. 

\end{proof}

\begin{proposition}\label{Bari-AOB}
Let $\XX=(x_n)_{n\geq 1}$ be a complete and minimal sequence of vectors in $\HH$, $\XX^*=(x_n^*)_{n\geq 1}$ its biorthogonal. The following assertions are equivalent:
\begin{enumerate}[(i)]
\item the sequence $\XX$ is an AOB in $\HH$;
\item there exists an operator $U_{\XX}\in\UK(\HH,\ell^2)$ such that $U_{\XX}(x_n)=e_n$, $n\geq 1$; 
\item the Gram matrix defines a bounded invertible operator on $\ell^2$ of the form $I+K$ with $K$ compact;
\item $J_{\XX^*}\in\UK(\HH,\ell^2)$;
\item the sequence $\XX^*$ is a complete AOB in $\HH$;
\item there exists an invertible operator 
$U_{\XX}:\HH\longrightarrow \ell^2$ such that 
$U_{\XX}(x_n)=e_n$, $n\geq 1$, and if 
$U_{\XX,N}:\span(x_n:n\geq N)\longrightarrow \span(e_n:n\geq N)$ is the restriction of $U_{\XX}$ to $\span(x_n:n\geq N)$, then 
$$
\lim_{N\to \infty}\|U_{\XX,N}\|=1=\lim_{N\to \infty}\|U_{\XX,N}^{-1}\|;
$$
\item for every $N\geq 1$, there are two constants $C_N,C_N^*>0$ such that 
\begin{equation}\label{eq:interpola-operator-1}
{C_N^*}^{-1} \|f\|_\HH^2\leq \sum_{n\geq N}|\langle f,x_n\rangle_\HH|^2\leq C_N \|f\|_\HH^2
\end{equation}
for every $f\in\HH\ominus\span(x_1,x_2,\dots,x_{N-1})$ and $\lim_{N\to\infty}C_N=1=\lim_{N\to\infty}C_N^*$;
\item the sequence $\XX^*$ is complete in $\HH$ and for every $N\geq 1$, there are two constants $C_N,C^*_N>0$ such that 
\begin{equation}\label{eq:interpola-operator-2}
\sum_{n\geq N}|\langle f,x_n\rangle_\HH|^2\leq C_N \|f\|_\HH^2 \quad \mbox{and}\quad \sum_{n\geq N}|\langle f,x^*_n\rangle_\HH|^2\leq C^*_N \|f\|_\HH^2,
\end{equation}
for every $f\in\HH$ and $\lim_{N\to\infty}C_N=1=\lim_{N\to\infty}C^*_N$
\end{enumerate}
\end{proposition}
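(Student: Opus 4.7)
The plan is to take (i) as a pivot and prove the cycle (i) $\Lra$ (iii) $\Lra$ (ii) $\Lra$ (iv) $\Lra$ (v), then attach (vi), (vii) and (viii) as satellites. The key objects throughout are the synthesis operator $V_\XX:\ell^2\to\HH$, $V_\XX a=\sum_n a_n x_n$, its inverse $U_\XX:=V_\XX^{-1}$ (which exists because an AOB is a Riesz basis), the identity $V_\XX^*=J_{\XX^*}$, and the Gram matrix $\Gamma_\XX=V_\XX^*V_\XX$.

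For (i) $\Lra$ (iii), let $P_N$ denote the orthogonal projection of $\ell^2$ onto $\span(e_n:n\geq N)$. A direct computation shows the AOB inequalities are exactly equivalent to $\|P_N(\Gamma_\XX-I)P_N\|\to 0$ as $N\to\infty$. Since the operators $(I-P_N)(\Gamma_\XX-I)$ and $(\Gamma_\XX-I)(I-P_N)$ are finite-rank for each $N$, one concludes that $\Gamma_\XX-I$ is a norm limit of finite-rank operators, hence compact; the reverse direction uses $\|P_N K P_N\|\to 0$ for compact $K$, and invertibility of $\Gamma_\XX$ follows from $\XX$ being a Riesz basis. For (iii) $\Ra$ (ii), apply the polar decomposition $V_\XX=U\sqrt{I+K}$, where $U$ is unitary because the invertibility of $\Gamma_\XX$ together with completeness of $\XX$ forces $V_\XX$ itself to be invertible. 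The technical point is that $\sqrt{I+K}-I$ is compact: diagonalizing the compact self-adjoint $K=\sum_j\lambda_j P_j$, write $\sqrt{I+K}-I=\sum_j(\sqrt{1+\lambda_j}-1)P_j$ with coefficients tending to zero. Thus $V_\XX\in\UK$, and Lemma~\ref{Lem:unitary-compact}(b) yields $U_\XX\in\UK$. The converse (ii) $\Ra$ (iii) is immediate from $(W+K')^*(W+K')=I+W^*K'+{K'}^*W+{K'}^*K'$, a compact perturbation of $I$.

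The remaining core equivalences (ii) $\Lra$ (iv) $\Lra$ (v) follow from Lemma~\ref{Lem:unitary-compact}(c) applied to $J_{\XX^*}=V_\XX^*$ and from applying the already-proved (i) $\Lra$ (ii) to the biorthogonal sequence $\XX^*$, using $\XX^{**}=\XX$ and the fact that $\XX^*$ is a complete Riesz basis whenever $\XX$ is. For (vi) $\Lra$ (i), the norms $\|U_{\XX,N}\|$ and $\|U_{\XX,N}^{-1}\|$ are exactly the reciprocal square roots of the optimal constants in the AOB inequality for indices $\geq N$, so the two statements are tautologically equivalent. For (i) $\Ra$ (viii), apply Lemma~\ref{Lem:NCBAO-Interp} to $\XX$ and (using (v)) to $\XX^*$; conversely, given a finitely supported $f=\sum_{n\geq N}a_n x_n$, Cauchy--Schwarz together with the upper bound on $\langle\cdot,x_n\rangle$ yields $\|f\|^2\leq C_N\sum_{n\geq N}|a_n|^2$, while the identity $a_n=\langle f,x_n^*\rangle$ (for $n\geq N$) combined with the upper bound on $\langle\cdot,x_n^*\rangle$ yields $\sum_{n\geq N}|a_n|^2\leq C_N^*\|f\|^2$. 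Finally, for (i) $\Ra$ (vii) expand $f\in\HH\ominus\span(x_1,\dots,x_{N-1})$ as $f=\sum_{n\geq N}\langle f,x_n\rangle x_n^*$ (the $n<N$ terms vanish by the orthogonality assumption) and apply the AOB of $\XX^*$ from (v); conversely, for (vii) $\Ra$ (v) the vector $g=\sum_{n\geq N}a_n x_n^*$ automatically lies in $\HH\ominus\span(x_1,\dots,x_{N-1})$ by biorthogonality and satisfies $\langle g,x_n\rangle=a_n$, so plugging it into (vii) recovers AOB for $\XX^*$, with completeness of $\XX^*$ coming from the $N=1$ lower bound forcing $J_{\XX^*}$ to be bounded below and thus invertible.

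The only genuinely non-routine step is (iii) $\Ra$ (ii), specifically the functional-calculus argument that $\sqrt{I+K}-I$ is compact whenever $K$ is compact self-adjoint with $I+K$ positive and invertible; every other implication is either a direct manipulation of the AOB inequality or a straightforward application of Lemmas~\ref{Lem:unitary-compact} and \ref{Lem:NCBAO-Interp}.
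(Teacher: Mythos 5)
Your proof is correct, and it differs from the paper's in one substantive way: the paper does not prove the core block $(i)\Leftrightarrow(ii)\Leftrightarrow(iii)$ at all, but cites it from Chalendar--Fricain--Timotin (Proposition 3.2 of \cite{chalendar2003functional}), whereas you prove it from scratch. Your two arguments there are sound and worth recording: for $(i)\Leftrightarrow(iii)$, the identity $\|\sum_{n\geq N}a_nx_n\|^2=\langle P_N\Gamma_\XX P_N a,a\rangle$ turns the AOB sandwich into $\|P_N(\Gamma_\XX-I)P_N\|\to 0$, and the decomposition $\Gamma_\XX-I=P_N(\Gamma_\XX-I)P_N+(I-P_N)(\Gamma_\XX-I)+P_N(\Gamma_\XX-I)(I-P_N)$ (the last two terms finite-rank) exhibits $\Gamma_\XX-I$ as a norm limit of finite-rank operators; for $(iii)\Rightarrow(ii)$ the polar decomposition $V_\XX=U(I+K)^{1/2}$ together with the spectral-calculus fact that $(I+K)^{1/2}-I$ is compact does the job. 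This makes the proposition self-contained at the cost of the functional-calculus step you correctly flag as the only non-routine point. The remaining implications --- $(iv)$ via $J_{\XX^*}=V_\XX^*=(U_\XX^{-1})^*$ and Lemma~\ref{Lem:unitary-compact}, $(v)$ via $x_\ell^*=U_\XX^*e_\ell$ and duality, $(vi)$ via the identification of $\|U_{\XX,N}\|$ and $\|U_{\XX,N}^{-1}\|$ with the optimal AOB constants, and $(vii)$, $(viii)$ via Lemma~\ref{Lem:NCBAO-Interp} and the expansion $f=\sum_{n\geq N}\langle f,x_n\rangle x_n^*$ --- follow essentially the same routes as the paper. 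Two cosmetic points: $\|U_{\XX,N}^{-1}\|$ equals $C_N^{1/2}$, not a reciprocal square root (your phrasing lumps the two cases together); and in $(vii)\Rightarrow(v)$ the passage from the two-sided bound on $J_{\XX^*}$ to invertibility onto all of $\ell^2$ uses Bari's theorem for surjectivity --- the paper glosses this in exactly the same way, so it is not a gap relative to the source.
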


\begin{proof}
The equivalences between $(i)$, $(ii)$ and $(iii)$ are contained in \cite[Proposition 3.2]{chalendar2003functional}. The equivalence with $(iv)$ follows from Bari's theorem, the fact that $J_{\XX^*}=V_\XX^*=(U_\XX^{-1})^*$, and then, by Lemma~\ref{Lem:unitary-compact}, $U_\XX\in \UK(\HH,\ell^2)$ if and only if $J_{\XX^*}\in\UK(\HH,\ell^2)$. 
Let us now prove the others implications.\

$(ii)\Longrightarrow (v)$: since 
$$
\delta_{n,\ell}=\langle U_\XX x_n,U_\XX x_\ell\rangle_{\ell^2}=\langle x_n,U_{\XX}^*U_\XX x_\ell\rangle_\HH,
$$
we get that $x_\ell^*=U_{\XX}^*U_\XX x_\ell=U_\XX^* e_\ell$. In other words, $U_{\XX^*}=(U_\XX^*)^{-1}$ and $\XX^*$ is a complete and minimal sequence. Now $(v)$ follows from Lemma~\ref{Lem:unitary-compact} and the implication $(ii)\Longrightarrow (i)$ applied to $\XX^*$.

$(v)\Longrightarrow (i)$: use the implication $(i)\Longrightarrow (v)$ applied to $\XX^*$.

$(i)\Longrightarrow (vi)$:  by Bari's theorem, we know that $U_\XX$ is a bounded invertible operator from $\HH$ onto $\ell^2$. Moreover, for any $x=\sum_{n\geq N}a_n x_n$, we have 
$$
\|U_{\XX,N}x\|_{\ell^2}^2=\left\|\sum_{n\geq N}a_n e_n\right\|_{\ell^2}^2=\sum_{n\geq N}|a_n|^2,
$$
and using \eqref{eq:AOB}, we have 
$$
c_N \|U_{\XX,N}x\|_{\ell^2}^2\leq \|x\|_\HH^2\leq C_N \|U_{\XX,N}x\|_{\ell^2}^2.
$$
Hence $C_N^{-1/2}\leq \|U_{\XX,N}\|\leq c_N^{-1/2}$. In particular, $\|U_{\XX,N}\|\to 1$ as $N$ goes to $\infty$. Similarly we prove that $\|U_{\XX,N}^{-1}\|\to 1$ as $N$ goes to $\infty$. 

$(vi)\Longrightarrow (i)$: by Bari's theorem, we know that $\XX$ is a Riesz basis. Moreover, we have 
$$
\left\|\sum_{n\geq N}a_n x_n\right\|_\HH^2=\left\|U_{\XX,N}^{-1}\left(\sum_{n\geq N}a_n e_n\right)\right\|_\HH^2\leq \|U_{\XX,N}^{-1}\|^2 \sum_{n\geq N}|a_n|^2,
$$
and 
$$
\sum_{n\geq N}|a_n|^2=\left\|U_{\XX,N}\left(\sum_{n\geq N}a_n x_n\right) \right\|_{\ell^2}^2\leq \|U_{\XX,N}\|^2 \left\|\sum_{n\geq N}a_nx_n\right\|_\HH^2.
$$
Hence 
$$
\|U_{\XX,N}\|^{-2}\sum_{n\geq N}|a_n|^2\leq \left\|\sum_{n\geq N}a_n x_n\right\|_\HH^2\leq \|U_{\XX,N}^{-1}\|^2 \sum_{n\geq N}|a_n|^2.
$$
Since $\|U_{\XX,N}\|$ and $\|U_{\XX,N}^{-1}\|$ go to $1$ as $N$ goes to $\infty$, we get that $(x_n)_{n\geq 1}$ is an AOB in $\HH$.

$(i)\Longrightarrow (vii)$: the right inequality in \eqref{eq:interpola-operator-1} follows from Lemma~\ref{Lem:NCBAO-Interp}. Since $(x_n^*)_{n\geq 1}$ is also a complete AOB in $\HH$, for any $N\geq 1$, there are two positive constants $c_N^*,C_N^*$ satisfying 
$$
c_N^*\sum_{n\geq N}|a_n|^2\leq \left\|\sum_{n\geq N}a_n x_n^*\right\|_{\HH}^2\leq C_N^*\sum_{n\geq N}|a_n|^2,
$$
and $c_N^*,C_N^*$ go to $1$ as $N$ goes to $\infty$. Moreover, we can decompose any $f\in\HH\ominus\span(x_1,x_2,\dots,x_{N-1})$ as 
$$
f=\sum_{n\geq N}\langle f,x_n\rangle_{\HH}x_n^*, 
$$
which gives that 
$$
\|f\|_\HH^2\leq C_N^* \sum_{n\geq N}|\langle f,x_n\rangle_\HH|^2,
$$
and then the second inequality of \eqref{eq:interpola-operator-1}.

$(vii)\Longrightarrow (v)$: since 
$$
{c_1^*}^{-1}\|f\|_\HH^2\leq \sum_{n\geq 1}|\langle f,x_n\rangle_\HH|^2\leq C_1 \|f\|_\HH^2\,
$$
for every $f\in\HH$, the operator $J_{\XX^*}$ is invertible from $\HH$ onto $\ell^2$. Hence, according to Bari's theorem, we know that $\XX$ and $\XX^*$ are Riesz basis for $\HH$. Moreover, every $f=\sum_{n\geq N}a_n x_n^*$ with $(a_n)_{n\geq N}\in\ell^2$ satisfies $f\in\HH\ominus\span(x_1,\dots,x_{N-1})$ and $\langle f,x_k \rangle_\HH=a_k$, $k\geq N$. Hence by \eqref{eq:interpola-operator-1}, we have
$$
{C_N^*}^{-1} \left\|\sum_{n\geq N}a_n x_n^*\right\|_\HH^2\leq \sum_{n\geq N}|a_n|^2\leq C_N \left\|\sum_{n\geq N}a_n x_n^*\right\|_\HH^2.
$$
That proves that $(x_n^*)_{n\geq 1}$ is an AOB. 

$(i)\Longrightarrow (viii)$: follows immediately from Lemma~\ref{Lem:NCBAO-Interp} and the fact that $(i)\Longrightarrow (v)$. 

$(viii)\Longrightarrow (i)$: let $f=\sum_{n\geq N}a_n x_n$, where $(a_n)_{n\geq N}$ is any finite sequence of complex numbers. Then, applying the second inequality of  \eqref{eq:interpola-operator-2} gives that 
$$
\sum_{n\geq N}|a_n|^2\leq C_N^* \left\|\sum_{n\geq N}a_n x_n\right\|_\HH^2.
$$
On the other hand, by Cauchy-Schwarz inequality 
\begin{eqnarray*} 
\left\|\sum_{n\geq N}a_n x_n \right\|^2_\HH & =&\sup_{\substack{g\in\HH\\ \|g\|_\HH\leq 1}} \left|\langle \sum_{n\geq N}a_n x_n,g \rangle_\HH \right|^2
= \sup_{\substack{g\in\HH\\ \|g\|_\HH\leq 1}} \left|\sum_{n\geq N}a_n \langle  x_n,g \rangle_\HH \right|^2\\
&\leq & \sum_{n\geq N}|a_n|^2 \sup_{\substack{g\in\HH\\ \|g\|_\HH\leq 1}} \sum_{n\geq N}|\langle x_n,g\rangle_\HH|^2 \leq C_N \sum_{n\geq N}|a_n|^2.
\end{eqnarray*}

\end{proof}

We now give two simple conditions, one necessary the other one sufficient to be an AOB. 
\begin{proposition}\label{cns-general-AOB}
Let $\XX=(x_n)_{n\geq 1}$ be a sequence of normalized vectors in $\HH$ and let $\Gamma_\XX=(\Gamma_{n,p})_{n,p\geq 1}$ be its Gram matrix. The followings hold:
\begin{enumerate}[(a)]
\item If 
$$
\lim_{N\to \infty}\left(\sup_{n\geq N}\sum_{\substack{p\geq N\\p\neq n}}|\Gamma_{n,p}|\right)=0,
$$
then $(x_n)_{n\geq 1}$ is an AOS.
\item If $(x_n)_{n\geq 1}$ is an AOB then 
$$
\lim_{n\to \infty}\left(\sum_{\substack{p\geq 1\\p\neq n}} |\Gamma_{n,p}|^2\right)=0.
$$
\end{enumerate}
\end{proposition}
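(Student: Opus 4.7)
For part (a), the plan is to exploit the normalization $\Gamma_{n,n}=\|x_n\|^2=1$ in order to expand, for any finitely supported sequence $(a_n)_{n\geq N}$,
\[
\left\|\sum_{n\geq N}a_nx_n\right\|_\HH^2=\sum_{n\geq N}|a_n|^2+\sum_{\substack{n,p\geq N\\ n\neq p}}a_n\overline{a_p}\,\Gamma_{n,p},
\]
so that the AOS bounds in \eqref{eq:AOB} become a statement about the norm of the off-diagonal tail of $\Gamma_\XX$ as an operator on $\ell^2$. Setting $\delta_N:=\sup_{n\geq N}\sum_{p\geq N,\,p\neq n}|\Gamma_{n,p}|$, Hermitian symmetry $\Gamma_{n,p}=\overline{\Gamma_{p,n}}$ forces the column sums to coincide with the row sums. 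Therefore Schur's test (applied via Cauchy--Schwarz on the double sum) yields
\[
\left|\sum_{\substack{n,p\geq N\\ n\neq p}}a_n\overline{a_p}\,\Gamma_{n,p}\right|\leq \delta_N\sum_{n\geq N}|a_n|^2,
\]
so that $(1-\delta_N)\sum_{n\geq N}|a_n|^2\leq \|\sum_{n\geq N}a_nx_n\|_\HH^2\leq (1+\delta_N)\sum_{n\geq N}|a_n|^2$. The hypothesis $\delta_N\to 0$ then gives constants $c_N=1-\delta_N$ and $C_N=1+\delta_N$ (for $N$ large enough so $\delta_N<1$) tending to $1$, which is exactly the AOS property.

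For part (b), I would invoke the equivalence $(i)\Leftrightarrow(iii)$ of Proposition~\ref{Bari-AOB}: since $\XX$ is an AOB, the Gram matrix defines a bounded invertible operator $\Gamma_\XX=I+K$ on $\ell^2$ with $K$ compact. Writing $K_{n,p}:=\langle Ke_p,e_n\rangle_{\ell^2}$, the off-diagonal entries of $\Gamma_\XX$ coincide with those of $K$, so by Hermitian symmetry of $\Gamma_\XX$ one has
\[
\sum_{\substack{p\geq 1\\ p\neq n}}|\Gamma_{n,p}|^2=\sum_{\substack{p\geq 1\\ p\neq n}}|\Gamma_{p,n}|^2=\sum_{\substack{p\geq 1\\ p\neq n}}|K_{p,n}|^2\leq \|Ke_n\|_{\ell^2}^2.
\]
Since the canonical basis $(e_n)_{n\geq 1}$ converges weakly to $0$ in $\ell^2$ and $K$ is compact, $\|Ke_n\|_{\ell^2}\to 0$. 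This yields the conclusion.

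The only step requiring genuine care is the Schur estimate in part (a): one has to notice that although the hypothesis is formulated as a uniform bound on row sums only, Hermitian symmetry automatically provides the matching column-sum bound, and this is precisely what makes Schur's test applicable with the single quantity $\delta_N$. The rest of the argument is routine bookkeeping.
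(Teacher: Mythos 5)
Your proposal is correct and follows essentially the same route as the paper: part (a) is the same expansion of the norm plus the Schur-type bound on the off-diagonal part (the paper carries it out via $ab\leq(a^2+b^2)/2$ and the symmetry $|\Gamma_{n,p}|=|\Gamma_{p,n}|$, which is the same estimate you describe), and part (b) is the same appeal to Proposition~\ref{Bari-AOB} together with the compactness of $K$ and the weak convergence $e_n\rightharpoonup 0$. No substantive differences.
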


\begin{proof} (a) Let $(a_n)_{n\geq 1}$ be a finite sequence of complex numbers and denote by 
$$
\epsilon_N=\sup_{n\geq N}\sum_{\substack{p\geq N\\p\neq n}}|\Gamma_{n,p}|.
$$
Write 
\begin{eqnarray*}
\left\|\sum_{n\geq N}a_n x_n\right\|^2_{\HH} & =&\sum_{n,p\geq N}a_n\overline{a_p} \langle x_n,x_p \rangle_{\HH}\\
&= &\sum_{n,p\geq N}a_n\overline{a_p}\,\Gamma_{n,p}\\
&=&\sum_{n\geq N}|a_n|^2+\sum_{\substack{n,p\geq N\\n\neq p}}a_n\overline{a_p}\,\Gamma_{n,p}.
\end{eqnarray*}
We will prove that 
\begin{equation}\label{eq:lemma-cns-general-AOB}
\left|\sum_{\substack{n,p\geq N\\n\neq p}}a_n\overline{a_p}\,\Gamma_{n,p}\right|\leq \varepsilon_N \sum_{n\geq N}|a_n|^2.
\end{equation}
Using that $ab\leq (a^2+b^2)/2$ and $|\Gamma_{n,p}|=|\Gamma_{p,n}|$, we have 
\begin{eqnarray*}
\left|\sum_{\substack{n,p\geq N\\n\neq p}}a_n\overline{a_p}\,\Gamma_{n,p}\right|&\leq& \frac{1}{2}\sum_{\substack{n,p\geq N\\n\neq p}}(|a_n|^2+|a_p|^2)|\Gamma_{n,p}| \\
&=&\sum_{\substack{n,p\geq N\\n\neq p}} |a_n|^2|\Gamma_{n,p}|\\
&=&\sum_{n\geq N}|a_n|^2 \sum_{\substack{p\geq N\\p\neq n}}|\Gamma_{n,p}|,
\end{eqnarray*}
which gives immediately \eqref{eq:lemma-cns-general-AOB}. Therefore, 
$$
(1-\varepsilon_N)\sum_{n\geq N}|a_n|^2\leq \left\|\sum_{n\geq N}a_n x_n\right\|^2_{\HH} \leq (1+\varepsilon_N)\sum_{n\geq N}|a_n|^2.
$$
Since $\varepsilon_N\to 0$ as $N\to \infty$, the last inequalities give that $(x_n)_{n\geq 1}$ is an AOS. 

(b) Since $\XX=(x_n)_{n\geq 1}$ is an AOB, we know from Proposition~\ref{Bari-AOB} that $\Gamma_\XX=I+K$, with $K$ compact. In particular, we have
$$
\|(\Gamma_\XX-I)e_n\|_{\ell^2}^2=\|K e_n\|_{\ell^2}^2\to 0,\mbox {as }n\to \infty.
$$
It remains to notice that 
$$
\|(\Gamma_\XX-I)e_n\|_{\ell^2}^2=\sum_{\substack{p\geq 1\\p\neq n}} |\Gamma_{n,p}|^2.
$$
\end{proof}

We end this subsection by two stability results. The first one is inspired by an analogue result of Baranov for Riesz basis property \cite{baranov2005stability}. The second one is a generalization of a result appearing in \cite[Proposition 3.3]{chalendar2003functional}.

\begin{proposition}\label{Prop:stability23}
Let $(x_n)_{n\geq 1}$ be an AOS in $\HH$ and let $(x'_n)_{n\geq 1}$ be a sequence of vectors in $\HH$. Suppose there exists $N_0\in\mathbb N$ such that for all $N\geq N_0$ there is $\varepsilon_N>0$ verifying 
\begin{equation}\label{frame}
\displaystyle\sum_{n \geq N} |\langle x, x_n-x'_n\rangle|^2 \leq \varepsilon_N \|x\|_\HH^2,
\end{equation}
for every $x\in\HH$ and $\lim_{N\to \infty}\varepsilon_N=0$. Then $(x'_n)_{n\geq 1}$ is an AOS. Furthermore, if $(x_n)_{n\geq 1}$ is a complete AOB, $N_0=1$ and $\varepsilon_1$ is sufficiently small, then $(x'_n)_{n\geq 1}$ is also a complete AOB. 
\end{proposition}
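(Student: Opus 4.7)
The plan is to reduce everything to properties of $(x_n)_{n\geq 1}$ by controlling the perturbation $y_n:=x_n-x'_n$ via a duality argument. The workhorse is the estimate
$$
\left\|\sum_{n\geq N}a_n y_n\right\|_\HH=\sup_{\substack{x\in\HH\\ \|x\|_\HH\leq 1}}\left|\sum_{n\geq N}a_n\langle y_n,x\rangle_\HH\right|\leq \sqrt{\varepsilon_N}\left(\sum_{n\geq N}|a_n|^2\right)^{1/2},
$$
obtained by Cauchy--Schwarz together with hypothesis \eqref{frame}. This turns the ``frame-type'' assumption on the $y_n$'s into a ``synthesis-type'' bound, which is exactly what one needs to compare $\|\sum a_n x'_n\|_\HH$ with $\|\sum a_n x_n\|_\HH$.

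For the first claim I would combine this with the triangle inequality and the AOS bounds of $(x_n)$. Writing the latter as $\sqrt{c_N}\,\|a\|_{\ell^2}\leq \|\sum_{n\geq N} a_n x_n\|_\HH\leq \sqrt{C_N}\,\|a\|_{\ell^2}$ for all $N$ at least as large as the threshold $N'_0$ appearing in the AOS definition of $(x_n)$, I obtain
$$
(\sqrt{c_N}-\sqrt{\varepsilon_N})^2\sum_{n\geq N}|a_n|^2\leq \left\|\sum_{n\geq N} a_n x'_n\right\|_\HH^2\leq (\sqrt{C_N}+\sqrt{\varepsilon_N})^2\sum_{n\geq N}|a_n|^2
$$
for $N\geq \max(N_0,N'_0)$ large enough that $\varepsilon_N<c_N$, which is automatic eventually since $c_N\to 1$ and $\varepsilon_N\to 0$. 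The two outer constants both tend to $1$, so $(x'_n)$ is an AOS.

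For the second claim I take $N_0=1$ and assume $\varepsilon_1<c_1$, where $c_1$ is the Riesz lower constant of the AOB $(x_n)$ at $N=1$. To upgrade to completeness I introduce the synthesis operators $V,V':\ell^2\to\HH$ defined by $Va=\sum_n a_n x_n$ and $V'a=\sum_n a_n x'_n$. Since $(x_n)$ is a Riesz basis, $V$ is invertible with $\|V^{-1}\|\leq c_1^{-1/2}$; taking adjoints in the workhorse estimate at $N=1$ gives $\|(V-V')^*y\|_{\ell^2}^2=\sum_n|\langle y,x_n-x'_n\rangle_\HH|^2\leq \varepsilon_1\|y\|_\HH^2$, so $\|V-V'\|\leq \sqrt{\varepsilon_1}$. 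Hence $\|V^{-1}(V'-V)\|\leq \sqrt{\varepsilon_1/c_1}<1$, and $V^{-1}V'=I+V^{-1}(V'-V)$ is invertible by a Neumann series, so $V'$ is surjective and $\overline{\span(x'_n:n\geq 1)}=\HH$. In particular $(x'_n)$ is a Riesz basis, which combined with the tail AOS bounds from the first part shows that $(x'_n)$ is a complete AOB.

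No genuine obstacle arises; the argument is essentially linear-algebraic once the duality estimate is in hand. The one delicate point is quantitative: the single smallness condition $\varepsilon_1<c_1$ must drive both the invertibility of $V'$ (completeness) and, via the estimate at $N=1$, the Riesz sequence bounds of $(x'_n)$, so that the phrase ``$\varepsilon_1$ sufficiently small'' admits a unified interpretation in terms of the Riesz lower constant of $(x_n)$.
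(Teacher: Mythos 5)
Your proof is correct and follows essentially the same route as the paper: the key estimate $\|\sum_{n\geq N}a_n(x_n-x'_n)\|_\HH\leq\sqrt{\varepsilon_N}\,\|(a_n)\|_{\ell^2}$ (which the paper derives by pairing the difference against itself rather than by your sup-over-the-unit-ball duality, a trivially equivalent computation) plus the triangle inequality for the AOS part, and a Neumann-series perturbation of an invertible operator for completeness. The only cosmetic difference in the second part is that you perturb the synthesis operator $V$ while the paper perturbs its adjoint, the interpolation operator $J_{\XX^*}x=(\langle x,x_n\rangle)_n$; these give the same conclusion since $\|V-V'\|=\|V^*-V'^*\|$.
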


\begin{proof}
Let $(a_n)_n\in \ell^2$. For the 
first part, it is sufficient to show 
that for sufficiently large $N$, we have 
\begin{equation}
(c_N+ \varepsilon_N-2\sqrt{c_N\varepsilon_N})\displaystyle\sum_{n \geq N} |a_n|^2 \leq \left\|\displaystyle\sum_{n \geq N}a_n x'_n\right\|_\HH^2 \leq (C_n + \varepsilon_N + 2\sqrt{C_N\varepsilon_N}) \displaystyle\sum_{n \geq N}|a_n|^2.
\end{equation}
We define $g_N:= \displaystyle\sum_{n \geq N}a_n x_n$ and $g'_N=\displaystyle\sum_{n \geq N}a_n x'_n$ and write

\begin{eqnarray*}
\|g_N -g'_N\|_\HH^2 &=& \left\langle g_N - g'_N, \displaystyle\sum_{n \geq N}a_n (x_n-x'_n) \right\rangle_\HH \\
&=& \displaystyle\sum_{n \geq N}\overline{a_n} \left\langle g_N-g'_N, x_n - x'_n\right\rangle.
\end{eqnarray*}
Hence, using Cauchy--Schwarz inequality, we have
\begin{eqnarray*}
\|g_N -g'_N\|_\HH^2  &\leq& \left(\displaystyle\sum_{n \geq N}|a_n|^2\right)^{1/2} \left(\displaystyle\sum_{n \geq N}|\langle g_N-g'_N, x_n-x'_n\rangle|^2\right)^{1/2}\\
&\leq& \sqrt \varepsilon_N \left(\displaystyle\sum_{n \geq N}|a_n|^2\right)^{1/2} \|g_N-g'_N\|_\HH.
\end{eqnarray*}
Therefore, we obtain $\|g_N-g'_N\|_\HH \leq \sqrt{\varepsilon_N} \|(a_n)_{n \geq N}\|_{\ell^2}.$
We now obtain the desired inequalities as follows:
\begin{eqnarray*}
\left\|\displaystyle\sum_{n \geq N} a_nx'_n\right\|_\HH &\geq& \left\|\displaystyle\sum_{n \geq N}a_nx_n\right\|_\HH - \left\|\displaystyle\sum_{n \geq N}a_n(x_n-x'_n)\right\|_\HH\\
&\geq& \sqrt{c_N} \left(\displaystyle\sum_{n \geq N} |a_n|^2 \right)^{1/2}- \sqrt{\varepsilon_N} \left(\displaystyle\sum_{n \geq N} |a_n|^2\right)^{1/2} \\
&=& (\sqrt{c_N} - \sqrt{\varepsilon_N}) \left(\displaystyle\sum_{n \geq N} |a_n|^2 \right)^{1/2}.
\end{eqnarray*}
And similarly,
\begin{eqnarray*}
\left\|\displaystyle\sum_{n \geq N} a_nx'_n\right\|_\HH \leq (\sqrt{C_N} + \sqrt{\varepsilon_n}) \left(\displaystyle\sum_{n \geq N} |a_n|^2\right)^{1/2}.
\end{eqnarray*}
Hence $(x'_n)_{n\geq 1}$ is an AOS.

Assume, furthermore that $(x_n)_{n\geq 1}$ is a complete AOB. Then, we know that the operator $J_{\XX^*}$, defined by 
$J_{\XX^*}x=(\langle x,x_n \rangle)_{n\geq 1}$, is an isomorphism from $\HH$ onto $\ell^2$. The inequality \eqref{frame} for $N=1$ implies that $\|J_{\XX^*}-J_{{\XX'}^*}\|\leq \sqrt{\varepsilon_1}$. Therefore for $\varepsilon_1$ sufficiently small, the operator $J_{{\XX'}^*}$ is also an isomorphism from $\HH$ onto $\ell^2$, and, hence, $(x'_n)_{n\geq 1}$ is a complete AOB.

\end{proof}

\begin{proposition}
Let $\XX=(x_n)_{n\geq 1}$ be a complete AOB in $\HH$ and $(x'_n)_{n\geq 1}$ be another sequence in $\HH$ satisfying 
$$
\sum_{n\geq 1}\|x_n-x_n'\|_\HH^2 < \|U_{\XX}\|^{-1}.
$$
Then $(x'_n)_{n\geq 1}$ is a complete AOB in $\HH$. 
\end{proposition}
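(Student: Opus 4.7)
The plan is to reduce the statement to the preceding stability result, Proposition~\ref{Prop:stability23}. By the Cauchy--Schwarz inequality, for every $x\in\HH$ and every $N\geq 1$,
$$
\sum_{n\geq N}|\langle x,x_n-x_n'\rangle_\HH|^2 \leq \|x\|_\HH^2\sum_{n\geq N}\|x_n-x_n'\|_\HH^2.
$$
Setting $\varepsilon_N:=\sum_{n\geq N}\|x_n-x_n'\|_\HH^2$, the summability assumption gives $\varepsilon_N\to 0$; this verifies hypothesis \eqref{frame} of Proposition~\ref{Prop:stability23} with $N_0=1$, so the first part of that proposition immediately yields that $(x_n')_{n\geq 1}$ is an AOS.

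To upgrade this to a \emph{complete} AOB I invoke the furthermore clause of Proposition~\ref{Prop:stability23}: it suffices to check that $\varepsilon_1$ is small enough for $J_{{\XX'}^*}$ to remain an isomorphism from $\HH$ onto $\ell^2$. Tracing the proof of that proposition, this reduces to the Neumann-series condition $\|J_{\XX^*}-J_{{\XX'}^*}\|<\|J_{\XX^*}^{-1}\|^{-1}$. The proof of Lemma~\ref{Lem:NCBAO-Interp} identifies $J_{\XX^*}$ with $V_\XX^*=(U_\XX^{-1})^*$, hence $\|J_{\XX^*}^{-1}\|=\|U_\XX\|$. Moreover $(J_{\XX^*}-J_{{\XX'}^*})g=(\langle g,x_n-x_n'\rangle_\HH)_{n\geq 1}$ is the adjoint of the operator $\ell^2\ni a\mapsto\sum_{n\geq 1} a_n(x_n-x_n')$, whose norm is at most $\bigl(\sum_{n\geq 1}\|x_n-x_n'\|_\HH^2\bigr)^{1/2}$ via a Hilbert--Schmidt estimate. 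Combined with the quantitative hypothesis, this delivers the strict inequality needed for Neumann inversion, so $J_{{\XX'}^*}$ stays invertible and $(x_n')_{n\geq 1}$ is a complete AOB.

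The main (mild) obstacle is the bookkeeping of constants in the last step: one has to combine the Hilbert--Schmidt bound on $V_\XX - V_{\XX'}$ with the identity $\|J_{\XX^*}^{-1}\|=\|U_\XX\|$ so that the smallness threshold stated in the proposition is exactly what the Neumann inversion argument requires. Once this matching is set up carefully, the result is obtained with no additional analytic input beyond what is already in Proposition~\ref{Prop:stability23}.
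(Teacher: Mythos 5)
Your argument follows essentially the same route as the paper. The first step (Cauchy--Schwarz to verify \eqref{frame} with $\varepsilon_N=\sum_{n\geq N}\|x_n-x_n'\|_\HH^2$, then Proposition~\ref{Prop:stability23} to obtain an AOS) is identical. For the upgrade to a complete AOB the paper does not invoke the furthermore clause of Proposition~\ref{Prop:stability23}; it defines $T:\HH\to\HH$ by $Tx_n=x_n'$ and shows $\|I-T\|<1$, so that $T$ is invertible and carries the complete minimal sequence $(x_n)_n$ onto $(x_n')_n$. Your variant perturbs the analysis operator $J_{\XX^*}=V_\XX^*$ instead of the synthesis operator $V_\XX$; since these are adjoints of one another, the two Neumann-series criteria reduce to the very same estimate, namely $\bigl(\sum_n\|x_n-x_n'\|_\HH^2\bigr)^{1/2}\,\|U_\XX\|<1$. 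So in substance the two proofs are equivalent.

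The genuine issue is the constant, which you describe as ``mild bookkeeping'' but do not actually close --- and it cannot be closed under the literal hypothesis. The Hilbert--Schmidt bound gives $\|J_{\XX^*}-J_{{\XX'}^*}\|\leq\bigl(\sum_n\|x_n-x_n'\|_\HH^2\bigr)^{1/2}<\|U_\XX\|^{-1/2}$, whereas Neumann inversion of $J_{{\XX'}^*}$ requires $\|J_{\XX^*}-J_{{\XX'}^*}\|<\|J_{\XX^*}^{-1}\|^{-1}=\|U_\XX\|^{-1}$. Since $\|U_\XX\|\geq\|U_{\XX,N}\|\to 1$ forces $\|U_\XX\|\geq 1$, the available bound $\|U_\XX\|^{-1/2}$ is in general larger than the required threshold $\|U_\XX\|^{-1}$, so the strict inequality you claim is not delivered. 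What your argument actually proves is the proposition under the hypothesis $\sum_n\|x_n-x_n'\|_\HH^2<\|U_\XX\|^{-2}$. To be fair, the paper's own proof contains the same slip: the step $\sqrt{\delta}\,\bigl(\sum_n|a_n|^2\bigr)^{1/2}\leq\delta\|U_\XX\|\,\bigl\|\sum_n a_nx_n\bigr\|$ would require $\bigl(\sum_n|a_n|^2\bigr)^{1/2}\leq\sqrt{\delta}\,\|U_\XX\|\,\bigl\|\sum_n a_nx_n\bigr\|$, which is not available, and only yields $\|I-T\|\leq\sqrt{\delta}\,\|U_\XX\|$. The statement is presumably intended with $\|U_\XX\|^{-2}$ on the right (equivalently, with the square root of the sum on the left), and with that reading both your proof and the paper's go through.
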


\begin{proof}
Let $x\in\HH$. Then we have 
$$
\sum_{n\geq N}|\langle x,x_n-x_n'\rangle|^2\leq \|x\|_\HH^2 \sum_{n\geq N}\|x_n-x_n'\|_\HH^2=\varepsilon_N \|f\|_\HH^2,
$$
where $\varepsilon_N=\sum_{n\geq N}\|x_n-x_n'\|_\HH^2$. It follows from hypothesis that $\varepsilon_N\to 0$ as $N$ goes to $\infty$. Hence by Proposition~\ref{Prop:stability23}, the sequence $(x'_n)_{n\geq 1}$ is an AOS. It remains to prove that $(x'_n)_{n\geq 1}$ is minimal and complete. For that purpose, define $T:\HH\longrightarrow\HH$ by $T(x_n)=x_n'$, $n\geq 1$, and let $\delta>0$ such that 
$$
\sum_{n\geq 1}\|x_n-x_n'\|_\HH^2 \leq \delta <\|U_{\XX}\|^{-1}.
$$
Then, for every finitely supported sequence $(a_n)_n$ of complex numbers, we have 
\begin{eqnarray*}
\left\|(I-T)\sum_{n\geq 1}a_n x_n \right\|&=&\left\|\sum_{n\geq 1}a_n (x_n-x'_n) \right|\\
&\leq & \left(\sum_{n\geq 1}|a_n|^2\right)^{1/2} \left(\sum_{n\geq 1}\|x_n-x_n'\|^2\right)^{1/2}\\
&\leq & \sqrt{\delta}\left(\sum_{n\geq 1}|a_n|^2\right)^{1/2}\\
&\leq & \delta \|U_{\XX}\| \left\|\sum_{n\geq 1}a_n x_n \right\|.
\end{eqnarray*}
Since $(x_n)_{n\geq 1}$ is a Riesz basis, we get that $I-T$ is bounded and $\|I-T\|\leq \delta \|U_{\XX}\|<1$. Thus $T=I-(I-T)$ is bounded and invertible. In particular, we deduce that $(x'_n)_{n\geq 1}$ is complete and minimal.

\end{proof}

\subsection{De Branges--Rovnyak spaces}
Let $H^\infty$ denote the space of bounded analytic functions on the upper half-plane $\mathbb{C}_+=\{z\in\mathbb{C}:\Im(z)>0\}$ normed by $\|f\|_\infty=\sup_{z\in\mathbb{C}_+}|f(z)|$ and $H^\infty_1=\{g\in H^\infty:\|g\|_\infty\leq 1\}$ is the closed unit ball of $H^\infty$, and for $b\in H^\infty_1$, the de Branges--Rovnyak spaces $\HH(b)$ is the reproducing kernel Hilbert space of analytic functions on $\mathbb{C}_+$ whose kernel is 
$$
k_\lambda^b(z)=\frac{i}{2\pi}\frac{1-\overline{b(\lambda)}b(z)}{z-\overline{\lambda}},\qquad \lambda,z\in \mathbb{C}_+.
$$
By definition, $f(\lambda)=\langle f,k_\lambda^b\rangle_b$ for all $f\in\HH(b)$ and $\lambda\in\mathbb{C}_+$, where $\langle \cdot,\cdot \rangle_b$ represents the inner product in $\HH(b)$. The space $\HH(b)$ can also be defined as the range space $(I-T_bT_b^*)^{1/2}H^2$ equipped with the norm which makes $(I-T_bT_b^*)^{1/2}$ a partial isometry. Here $H^2$ is the Hardy space of $\mathbb C_+$, that is the space of analytic functions $f$ on $\mathbb C_+$ verifying 
$$
\|f\|_2^2=\sup_{y>0}\left(\int_{-\infty}^\infty |f(x+iy)|^2\,dx\right)<\infty,
$$
$T_\varphi$ is the Toeplitz operator on the Hardy space $H^2$ on $\mathbb{C}_+$ with symbol $\varphi\in L^\infty(\mathbb R)$ defined by $T_\varphi(f)=P_+(\varphi f)$, $f\in H^2$, where $P_+$ denotes the orthogonal projection of $L^2(\mathbb R)$ onto $H^2$. 

These spaces (and, more precisely, their general vector-valued version) were introduced by de Branges and Rovnyak
\cite{de1966canonical, de1966square} as universal model spaces for Hilbert space contractions. Thanks to the
pioneer works of Sarason, we know that de Branges--Rovnyak spaces play an important role in numerous questions of complex analysis and
operator theory. The book \cite{sarason1994sub} is the classical reference for $\HH(b)$ spaces. See also the forthcoming monography \cite{fricain2014theory}. 

In the special case where $b=\Theta$ is an inner
function (that is, $|\Theta|=1$ a.e. on $\mathbb{R}$), the operator
$(Id-T_\Theta T_{\Theta}^*)^{1/2}$ is an orthogonal projection and
$\mathcal{H}(\Theta)$ becomes a closed (ordinary) subspace
of $H^2$ which coincides with the so-called model subspace
\[
K_\Theta=H^2\ominus \Theta
H^2=H^2\cap \Theta \, \overline{H^2}.
\]
For the model space theory see \cite{nikolski2001operators}.

It turns out that the boundary behavior of functions in $\HH(b)$ is controlled by the boundary behavior of the function $b$ itself. More precisely, let $b=B I_\mu
O_b$ be the canonical factorization of $b$, where
$$
B(z) = \prod_n e^{i\alpha_n}\frac{z-z_n}{z-\overline{z_n}}
$$
is a Blaschke product, the singular inner function $I_\mu$ is given
by
$$
I_\mu(z) = \exp\left(iaz - \frac{i}{\pi}\int_\mathbb{R}
\bigg(\frac{1}{z-t} + \frac {t}{t^2+1}\bigg)\,d\mu(t)\right)
$$
with a positive measure $\mu$ on $\mathbb{R}$ singular with respect to Lebesgue measure $dt$ such that $\int_{\mathbb R}(1+t^2)^{-1}\,d\mu(t)<\infty$ and $a\ge 0$,
and $O_b$ is the outer function
$$
O_b(z) = \exp\left(\frac{i}{\pi}\int_\mathbb{R} \bigg(\frac{1}{z-t} + \frac
{t}{t^2+1}\bigg) \log|b(t)|\,dt\right).
$$
For $x_0\in\mathbb R$ and $\ell\geq 1$, let 
$$
S_\ell(x_0):=\sum_{n=1}^\infty \frac{\Im(z_n)}{|x_0-z_n|^\ell}+\int_{\mathbb R}\frac{d\mu(t)}{|x_0-t|^\ell}+\int_{\mathbb R}\frac{|\log|b(t)||}{|x_0-t|^\ell}\,dt,
$$
and $E_\ell(b)=\{x_0\in\mathbb{R}:S_\ell(x_0)<\infty\}$. The set $E_\ell(b)$ is related to nontangential boundary limits of functions (and their derivatives) in $\HH(b)$. More precisely, if $S_2(x_0)<\infty$, then it is proved in \cite{fricain2008boundary} that for each $f\in\HH(b)$,
the nontangential limit
$$
f(x_0)=\lim_{\substack{\,\,z  \longrightarrow  x_0\\ \sphericalangle}}  f(z)
$$
exists, the function
$$
k_{x_0}^b(z)=\frac{i}{2\pi}\,\frac{1-\overline{b(x_0)}b(z)}{z-x_0},\qquad z\in\mathbb C_+,
$$
belongs to $\HH(b)$ and $\langle f,k_{x_0}^b\rangle_b=f(x_0)$, $f\in\HH(b)$. In that case, we also have $\|k_{x_0}^b\|_b^2=S_2(x_0)=|b'(x_0)|$.  Moreover if $S_4(x_0)<\infty$, for every function $f\in\HH(b)$, $f(z)$ and $f'(z)$ have finite limits as $z$ tends nontangentially to $x_0$.  In \cite{baranov2010weighted}, some Bernstein's type inequality is proved in the $\HH(b)$ space. To state this inequality, we need to introduce the following kernel. For $z_0\in\mathbb C_+\cup E_4(b)$, we define 
$$
\mathfrak{K}_{z_0}^b(t)=\overline{b(z_0)}\,\frac{2-\overline{b(z_0)}b(t)}{(t-\overline{z_0})^2}.
$$
It is not difficult to see that 
$\rho^{1/q}\mathfrak{K}_{z_0}^b \in L^q(\mathbb R)$ if and only if 
$$
\int_{\mathbb R}\frac{|\log|b(t)||}{|t-z_0|^{2q}}\,dt<\infty,
$$
where $\rho(t)=1-|b(t)|^2$, $t\in\mathbb R$. Now, for $1<p\leq 2$ and $q$ its conjugate exponent, we define 
$$
w_p(z):=\min\left\{\, \|{(k_z^b)}^2\|_q^{-p/(p+1)}, \,\,
\|\rho^{1/q}\mathfrak{K}_z^b\|_q^{-p/(p+1)} \,\right\},\qquad z\in\overline{\mathbb C_+},
$$
where $\|\cdot\|_q$ denotes the $L^q(\mathbb R)$-norm with respect to Lebesgue measure $dt$ on $\mathbb R$. 

We assume $w_{p,n}(x)=0$, whenever $x\in \mathbb{R}$ and at least one of the functions $(k_x^b)^2$ or $\rho^{1/q}
\mathfrak{K}_x^b$ is not in $L^q(\mathbb{R})$. Note that if $f\in\HH(b)$ and $1<p\leq 2$, then $f'w_p$  is well defined on $\mathbb R$. Indeed, if $S_4(x)<\infty$, then $f'(x)$ and $w_p(x)$ are finite. If $S_4(x)=\infty$, then as shown, in \cite{baranov2010weighted,fricain2008boundary},  $\|(k_x^b)^2\|_q=\infty$, which, by definition, implies that $w_p(x)=0$, and thus we may assume that $(f'w_p)(x)=0$. Moreover, note that in the inner case, we have $\rho(t)=0$ for a.e. $t\in\mathbb R$, and the second term in the definition of the weight $w_p$ disappears. We will need two useful estimates for the weight $w_p$. The first one, proved in~\cite[Lemma 3.5]{baranov2010weighted},  is valid for every function $b\in H^\infty_1$: there is a constant
$C=C(p)>0$ such that
\begin{equation}\label{eq:estimate1-kernel-Bernstein}
w_p(z)\geq C \frac{\Im z}{(1-|b(z)|)^{\frac{p}{q(p+1)}}},\qquad (z\in\mathbb{C}_+).
\end{equation}
The second one, proved in \cite{baranov2005bernstein} and valid when $b=\Theta$ is an inner function, says that there is two constants $C_1,C_2>0$ such that 
\begin{equation}\label{eq:estimate2-kernel-Bernstein}
C_1 v_0(x)\leq w_p(x) \leq C_2 |\Theta'(x)|^{-1},\qquad (x\in\R),
\end{equation}
where $v_0(x)=\min(d_0(x),|\Theta'(x)|^{-1})$, $d_0(x)={\dist}(x,\sigma(\Theta))$ and $\sigma(\Theta)$ is the spectrum of the inner function $\Theta$ defined as the set of all 
$\zeta\in\overline{\mathbb C_+}\cup\{\infty\}$ such that $\liminf_{z\to\zeta\,z\in\mathbb C_+}|\Theta(z)|=0$. Remind that every function $f\in K_\Theta$ has an analytic continuation through $\mathbb{R}\setminus\sigma(\Theta)$. Moreover, the quantity $v_0$ has a simple geometrical meaning related to the sublevel sets $\Omega(\Theta,\delta)=\{z\in\mathbb{C}_+:|\Theta(z)|\leq \delta\}$. Namely, $v_0(x)\asymp {\dist}(x,\Omega(\Theta,\delta))$ with the constants depending only on $\delta\in (0,1)$. 

We also remind that a Borel measure $\mu$ in the closed upper half-plane $\overline{\mathbb{C}_+}$ is said to be a \emph{Carleson measure} if there is a
constant $C>0$ such that
\begin{equation}\label{carl} 
\mu(\, S(x,h) \,)\le C \, h,
\end{equation}
for all squares $S(x,h)=[x,x+h]\times[0,h]$, $x\in\mathbb{R}$, $h>0$,
with the lower side on the real axis. We denote the class of Carleson measures by $\mathcal{C}$ and the best constant satisfying \eqref{carl} is called the \emph{Carleson constant} of $\mu$ and is denoted by $C_\mu$. Recall that, according to a classical
theorem of Carleson, $\mu \in \mathcal{C}$ if and only if
$H^p\subset L^p(\mu)$ for some (all) $p>0$.
In \cite{baranov2010weighted}, it is proved that if $\mu\in\mathcal{C}$, $1<p<2$, then there exists a constant $K=K(\mu,p)>0$ such that 
\begin{equation}\label{eq:mainineq}
\|f'w_p\|_{L^2(\mu)} \le K\|f\|_b,
\qquad f\in \mathcal{H}(b).
\end{equation}
In other words, the map $f\longmapsto f'w_p$ is a bounded operator from $\HH(b)$ into $L^2(\mu)$. In the case $p=2$, then this map is of weak type $(2,2)$ as an operator from $\mathcal{H}(b)$ to $L^2(\mu)$.

\section{Some stability results}
This section contains results about the stability of AOBs under certain perturbations. We will often use techniques developed by Baranov \cite{baranov2005stability} concerning the stability problem for Riesz basis in $K_\Theta$.  

For $\lambda\in\mathbb{C}_+\cup E_2(b)$, we denote by $\kappa_\lambda^b$ the
normalized reproducing kernel at the point $\lambda$, that is,
$\kappa_\lambda^b= k_\lambda^b/\|k_\lambda^b\|_b$. Let
$(\kappa_{\lambda_n}^b)_{n\geq 1}$ be an AOS in $\mathcal{H}(b)$, let $G=\bigcup_n G_n\subset
\overline{\mathbb{C}_+}$ satisfy the following properties:
\begin{enumerate}
\item[$\mathrm{(i)}$] $\lambda_n\in G_n$; 
\item[$\mathrm{(iii)}$] for every $z_n\in G_n$, we have 
\[
\lim_{n\to \infty} \frac{\|k_{z_n}^b\|_b}{\|k_{\lambda_n}^b\|_b}=1 ;
\]
\item[$\mathrm{(ii)}$] for every $z_n\in G_n$, the measure
$\nu=\sum_n \delta_{[\lambda_n,z_n]}$
is a Carleson measure and, moreover, the Carleson constants $C_\nu$ of such
measures (see (\ref{carl})) are uniformly bounded with respect to $z_n$.
Here $[\lambda_n,z_n]$ is the straight line interval with the endpoints
$\lambda_n$ and $z_n$, and $\delta_{[\lambda_n,z_n]}$ is the Lebesgue measure
on the interval.
\end{enumerate}
For 
$(\lambda_n)_{n\geq 1}\subset\mathbb C_+$, we show there always exist non-trivial sets $G_n$ satisfying $(i)$, $(ii)$ and $(iii)$. More
precisely, we can take
\[
G_n:=\{z\in\mathbb{C}_+:|z-\lambda_n|<\varepsilon_n\Im\lambda_n\},
\]
where $(\varepsilon_n)_n$ is any sequence of positive numbers tending to $0$. We first begin with a technical lemma. 
\bl\label{Lemma:stability-condition-ii}
Let $b\in H^\infty_1$, let $(\varepsilon_n)_n$ be a sequence of positive numbers such that $\varepsilon_n\to 0$, as $n\to\infty$, and let $(\lambda_n)_n$ and $(\mu_n)_n$ be sequences in $\mathbb{C}_+$ satisfying 
\begin{equation}\label{eq:stability-result-ensemble-Gn}
|\lambda_n-\mu_n|\leq \varepsilon_n \Im\lambda_n.
\end{equation}
Then 
$$
\lim_{n\to \infty} \frac{\|k_{\mu_n}^b\|_b}{\|k_{\lambda_n}^b\|_b}=1.
$$
\el

\begin{proof}
We easily check from \eqref{eq:stability-result-ensemble-Gn} that 
\begin{equation}\label{eq2:stability-result-ensemble-Gn}
1-\varepsilon_n\leq \frac{\Im\mu_n}{\Im\lambda_n}\leq 1+\varepsilon_n,\qquad n\geq 1.
\end{equation}
Since 
$$
\|k_z^b\|_b^2=\frac{1-|b(z)|^2}{4\pi\Im z},
$$
it is sufficient to prove that 
\begin{equation}\label{eq3:stability-result-ensemble-Gn}
\frac{1-\varepsilon_n}{1+\varepsilon}\leq \frac{1-|b(\lambda_n)|}{1-|b(\mu_n)|}\leq \frac{1+\varepsilon_n}{1-\varepsilon_n}.
\end{equation}
Using Schwarz--Pick inequality, we have 
$$
\left|\frac{b(\lambda_n)-b(\mu_n)}{1-\overline{b(\lambda_n)}b(\mu_n)}\right|\leq \left|\frac{\lambda_n-\mu_n}{\lambda_n-\overline{\mu_n}}\right|\leq \frac{|\lambda_n-\mu_n|}{\Im\lambda_n}\leq\varepsilon_n,
$$
and \eqref{eq3:stability-result-ensemble-Gn} follows from 
\cite[Lemma 7]{havin1974free} which says that if $\lambda,\mu\in\mathbb{D}$ and satisfies 
$$
\left|\frac{\lambda-\mu}{1-\overline\lambda \mu}\right|\leq \varepsilon, 
$$
then 
$$
\frac{1-\varepsilon}{1+\varepsilon}\leq \frac{1-|\lambda|}{1-|\mu|}\leq \frac{1+\varepsilon}{1-\varepsilon}.
$$

\end{proof}

\begin{corollary}\label{Cor:existence-set-Gn}
Let $b\in H^\infty_1$, let $(\lambda_n)_{n\geq 1}\subset\mathbb{C}_+$ such that $(\kappa_{\lambda_n}^b)_n$ is an AOS. Let $(\varepsilon_n)_{n\geq 1}$ be a sequence of positive numbers such that $\varepsilon_n\to 0$ as $n\to\infty$. Define 
$$
G_n:=\{z\in\mathbb{C}_+:|z-\lambda_n|<\varepsilon_n\Im\lambda_n\},\qquad n\geq 1.
$$
Then the set $G_n$ satisfies $(i),(ii)$ and $(iii)$. 
\end{corollary}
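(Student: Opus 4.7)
Parts $(i)$ and $(iii)$ are routine. Since $\varepsilon_n\Im\lambda_n>0$, we trivially have $\lambda_n\in G_n$, giving $(i)$. For $(iii)$, any $z_n\in G_n$ satisfies $|z_n-\lambda_n|<\varepsilon_n\Im\lambda_n$ with $\varepsilon_n\to 0$, so Lemma~\ref{Lemma:stability-condition-ii} applied with $\mu_n:=z_n$ yields $\|k^b_{z_n}\|_b/\|k^b_{\lambda_n}\|_b\to 1$.

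For $(ii)$ I would fix any selection $z_n\in G_n$ and any Carleson square $S(x,h)$, then estimate $\nu(S(x,h))=\sum_n|[\lambda_n,z_n]\cap S(x,h)|$ geometrically. Every point $w$ on $[\lambda_n,z_n]$ satisfies $|w-\lambda_n|\leq\varepsilon_n\Im\lambda_n$, so $\Im w\geq(1-\varepsilon_n)\Im\lambda_n$ and $|\Re w-\Re\lambda_n|\leq\varepsilon_n\Im\lambda_n$. Hence if the segment meets $S(x,h)$, then $\Im\lambda_n\leq h/(1-\varepsilon_n)$ and $\Re\lambda_n\in[x-\varepsilon_n\Im\lambda_n,\,x+h+\varepsilon_n\Im\lambda_n]$. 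Choosing $N_0$ with $\varepsilon_n\leq 1/2$ for $n\geq N_0$ confines such $\lambda_n$ to an enlarged box $\tilde S$ of side proportional to $h$. Since the intersection has length at most $\varepsilon_n\Im\lambda_n$, this gives a bound of the form
\[
\nu(S(x,h))\leq R(h)+\sum_{\substack{n\geq N_0\\ \lambda_n\in\tilde S}}\varepsilon_n\Im\lambda_n,
\]
where $R(h)=O(h)$ is the finite remainder coming from the indices $n<N_0$.

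To close the estimate uniformly in $x$, $h$ and in the choice of $z_n$, I would invoke the Carleson-type consequence of the AOS hypothesis: the Bessel inequality $\sum_n|\langle f,\kappa^b_{\lambda_n}\rangle_b|^2\leq C\|f\|_b^2$ together with the norm identity $\|k^b_{\lambda_n}\|_b^2=(1-|b(\lambda_n)|^2)/(4\pi\Im\lambda_n)$ should imply that $\sum_n\Im\lambda_n\,\delta_{\lambda_n}\in\mathcal C$. Once this is in place, $\sum_{\lambda_n\in\tilde S}\Im\lambda_n\leq C_1 h$, and since $\sup_n\varepsilon_n<\infty$ the main sum is controlled by $(\sup_n\varepsilon_n)\,C_1\,h$, which combines with the $O(h)$-remainder to yield the uniform Carleson bound. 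The main obstacle is justifying this Carleson consequence of AOS in the generality of $\HH(b)$ without the hypothesis \eqref{eq:additional-intro-hruvsvcev1981unconditional}; this is precisely the difficulty that motivates the paper's substitution of Bernstein-type inequalities for the Hru\v{s}\v{c}ev--Nikolski--Pavlov projection method.
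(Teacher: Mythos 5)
Your treatment of $(i)$ and of the norm-ratio condition is correct and coincides with the paper's (both reduce the latter to Lemma~\ref{Lemma:stability-condition-ii}), and your geometric reduction of the Carleson bound for $\nu=\sum_n\delta_{[\lambda_n,z_n]}$ to the Carleson property of $\sum_n\Im\lambda_n\,\delta_{\lambda_n}$ is sound — indeed more explicit than what the paper writes. But the step you defer, namely that the AOS hypothesis forces $\sum_n\Im\lambda_n\,\delta_{\lambda_n}$ to be a Carleson measure, is the entire content of this part of the corollary, and you leave it as an acknowledged ``main obstacle.'' As written, the proof is therefore incomplete. The obstacle is not real, and the paper disposes of it with an elementary Gram-matrix argument that uses no Bernstein inequality and no hypothesis of the type $\sup_n|b(\lambda_n)|<1$: since a tail of $(\kappa_{\lambda_n}^b)_n$ is a Riesz sequence, the rows of the Gram matrix are uniformly in $\ell^2$, i.e.\ $\sup_n\sum_p|\Gamma_{n,p}|^2<\infty$ (this is Proposition~\ref{cns-general-AOB} / the Bessel bound tested on $f=\kappa_{\lambda_n}^b$). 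Then one computes
$$
|\Gamma_{n,p}|^2=\frac{4\,\Im\lambda_n\,\Im\lambda_p}{|\lambda_p-\overline{\lambda_n}|^2}\cdot\frac{|1-\overline{b(\lambda_n)}b(\lambda_p)|^2}{(1-|b(\lambda_n)|^2)(1-|b(\lambda_p)|^2)}\;\geq\;\frac{\Im\lambda_n\,\Im\lambda_p}{|\lambda_p-\overline{\lambda_n}|^2},
$$
the last inequality because $|1-\bar uv|^2\geq(1-|u|\,|v|)^2\geq(1-|u|^2)(1-|v|^2)$ for $u,v\in\overline{\mathbb{D}}$; the dependence on $b$ disappears entirely. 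Hence $\sup_n\sum_p\Im\lambda_n\Im\lambda_p/|\lambda_p-\overline{\lambda_n}|^2<\infty$, which is the classical criterion (Nikolski, Lecture VII) for $\sum_n\Im\lambda_n\,\delta_{\lambda_n}$ to be Carleson.

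Two further remarks. First, your own suggested route can also be made to work: the Bessel inequality tested on $f=k_z^b$ for an \emph{arbitrary} $z\in\mathbb{C}_+$, combined with the same inequality $|1-\overline{b(z)}b(\lambda_n)|^2\geq(1-|b(z)|^2)(1-|b(\lambda_n)|^2)$, yields $\sum_n\Im\lambda_n/|\lambda_n-\bar z|^2\lesssim(\Im z)^{-1}$ uniformly in $z$, which is again the Carleson criterion; testing on general $f\in\HH(b)$, as you propose, only gives an embedding of $\HH(b)$ rather than of $H^2$ and does not characterize Carleson measures. Second, your closing attribution is off: the Bernstein-type inequalities enter the paper in the perturbation argument of Theorem~\ref{prop1}, not here — this corollary is purely a Gram-matrix statement.
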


\begin{proof}
It is obvious that $G_n$ satisfies $(i)$ and condition $(ii)$ follows from Lemma~\ref{Lemma:stability-condition-ii}. According to 
Proposition~\ref{cns-general-AOB}, there exists a constant $C>0$ such that for every $n\geq 1$, we have 
$$
\sum_{p\geq 1}|\Gamma_{n,p}|^2\leq C,
$$
where $\Gamma_{n,p}=\langle \kappa_{\lambda_n}^b, \kappa_{\lambda_p}^b\rangle_b$. 
Since
$$
|\Gamma_{n,p}|^2=\frac{16\pi^2\Im\lambda_n \Im\lambda_p}{|\lambda_p-\overline{\lambda_n}|^2}\frac{|1-\overline{b(\lambda_n)}b(\lambda_p|^2}{(1-|b(\lambda_n|^2)(1-|b(\lambda_p)|^2}\geq \frac{\Im\lambda_n \Im\lambda_p}{|\lambda_p-\overline{\lambda_n}|^2},
$$
we obtain 
$$
\sum_{p\geq 1} \frac{\Im\lambda_n \Im\lambda_p}{|\lambda_p-\overline{\lambda_n}|^2} \leq C.
$$
It is known (see for instance \cite[Lecture VII]{nikolski1986treatise}) that this condition implies that the measure $\nu=\sum_n \Im\lambda_n \delta_{\lambda_n}$ is a Carleson measure. Therefore, the set $G_n$ also satisfies $(iii)$. 

\end{proof}

Note that in \cite{baranov2005stability,baranov2010weighted}, similar sets were considered in connection with stability of Riesz property. In that situation, condition (ii) can be replaced by the weaker condition that there exists two positive constants $c,C>0$ such that 
$$
c\leq \frac{\|k^b_{z_n}\|_b}{\|k^b_{\lambda_n}\|_b}\leq C,\qquad  z_n\in G_n, n\geq 1,
$$
and the set $G_n$ can be taken as
\[
G_n:=\{z\in\mathbb{C}_+:|z-\lambda_n|<r\Im\lambda_n\},
\]
for sufficiently small $r>0$.

\begin{theorem}\label{prop1}
Let $b\in H^\infty_1$, let $(\lambda_n)_n\subset\mathbb C_+\cup E_2(b)$ be such that $(\kappa_{\lambda_n}^b)_{n\geq 1}$ is an AOS in $\HH(b)$, let $1<p<2$, let $G=\cup_{n\geq 1}G_n$ satisfy $(i), (ii)$ and $(iii)$, and let $\mu_n\in G_n$, $n\geq 1$. Suppose there exists $N_0\in\mathbb N$ such that for all $N\geq N_0$ there is $\varepsilon_N>0$ verifying 
\begin{equation}\label{eq0:thm-stability-Baranov-type}
\sup_{n \geq N} \frac{1}{\|k^b_{\lambda_n}\|_b^2} \int_{[\lambda_n, \mu_n]} w_p^{-2}(z) |dz| \leq  \varepsilon_N,
\end{equation}
and $\lim_{N\to\infty}\varepsilon_N=0$. Then 
the sequence $(\kappa_{\mu_n}^b)_{n\geq 1}$ is an AOS in $\HH(b)$. Moreover, if $(\kappa_{\lambda_n}^b)_{n\geq 1}$ is a complete AOB in $\HH(b)$ and if we can take $N_0=1$ and $\varepsilon_1$ sufficiently small, then $(\kappa_{\mu_n}^b)_{n\geq 1}$ is also a complete AOB in $\HH(b)$.
\end{theorem}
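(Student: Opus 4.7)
The plan is to reduce Theorem~\ref{prop1} to Proposition~\ref{Prop:stability23} applied with $x_n=\kappa_{\lambda_n}^b$ and $x_n'=\kappa_{\mu_n}^b$. Writing $\alpha_n:=\|k_{\lambda_n}^b\|_b$ and $\beta_n:=\|k_{\mu_n}^b\|_b$, the reproducing property gives the splitting
\[
\langle f,\kappa_{\lambda_n}^b-\kappa_{\mu_n}^b\rangle_b \;=\; \underbrace{\frac{f(\lambda_n)-f(\mu_n)}{\beta_n}}_{A_n} \;+\; \underbrace{\langle f,\kappa_{\lambda_n}^b\rangle_b\Bigl(1-\frac{\alpha_n}{\beta_n}\Bigr)}_{B_n},
\]
so that verifying hypothesis \eqref{frame} of Proposition~\ref{Prop:stability23} reduces to producing a sequence $\varepsilon_N'\to 0$ with $\sum_{n\ge N}(|A_n|^2+|B_n|^2)\le \varepsilon_N'\|f\|_b^2$.

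For the analytic term $A_n$, I write $f(\lambda_n)-f(\mu_n)=-\int_{[\lambda_n,\mu_n]}f'(z)\,dz$ (valid because $f\in\HH(b)$ is analytic on $\mathbb{C}_+$ and admits regular boundary values at points of $E_2(b)$), insert $w_p(z)\cdot w_p^{-1}(z)$, and apply Cauchy--Schwarz to obtain
\[
|f(\lambda_n)-f(\mu_n)|^2\;\le\;\Bigl(\int_{[\lambda_n,\mu_n]}w_p^{-2}(z)\,|dz|\Bigr)\Bigl(\int_{[\lambda_n,\mu_n]}|f'(z)|^2 w_p^2(z)\,|dz|\Bigr).
\]
Condition (iii) gives $\alpha_n/\beta_n\to 1$, so $\beta_n\asymp \alpha_n$ for large $n$, and dividing by $\beta_n^2$ and using hypothesis~\eqref{eq0:thm-stability-Baranov-type} on the first factor yields $|A_n|^2\le C\varepsilon_N\int_{[\lambda_n,\mu_n]}|f'w_p|^2\,|dz|$ for $n\ge N$. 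Summing and introducing $\nu=\sum_n\delta_{[\lambda_n,\mu_n]}$, which is Carleson with uniform constant by condition (ii), the Bernstein-type inequality \eqref{eq:mainineq} delivers $\sum_{n\ge N}|A_n|^2\le CK\varepsilon_N\|f\|_b^2$.

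For the normalization term $B_n$, note that the upper inequality in \eqref{eq:AOB} for the AOS $(\kappa_{\lambda_n}^b)$ means the synthesis map $(a_n)_{n\ge N}\mapsto\sum_{n\ge N}a_n\kappa_{\lambda_n}^b$ has norm at most $\sqrt{C_N}$, so its adjoint gives the Bessel inequality $\sum_{n\ge N}|\langle f,\kappa_{\lambda_n}^b\rangle_b|^2\le C_N\|f\|_b^2$. With $\eta_N:=\sup_{n\ge N}|1-\alpha_n/\beta_n|\to 0$ (from (iii)), this yields $\sum_{n\ge N}|B_n|^2\le\eta_N^2 C_N\|f\|_b^2$. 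Setting $\varepsilon_N':=2CK\varepsilon_N+2\eta_N^2 C_N$ gives $\varepsilon_N'\to 0$, and Proposition~\ref{Prop:stability23} yields the AOS conclusion. The completeness claim follows by the same computation at $N=1$: once $\varepsilon_1$ is chosen small enough, $\varepsilon_1'$ falls below the threshold required by the second part of Proposition~\ref{Prop:stability23}, delivering a complete AOB.

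The main obstacle is the $A_n$ estimate: one must interlock three ingredients, namely the segmentwise Cauchy--Schwarz inequality weighted by $w_p$, the aggregation of the line segments $[\lambda_n,\mu_n]$ into a single Carleson measure via condition~(ii), and the $\HH(b)$-Bernstein inequality \eqref{eq:mainineq} that converts $\|f'w_p\|_{L^2(\nu)}$ into $\|f\|_b$. The $B_n$ contribution is comparatively routine once one notices that an AOS automatically enjoys a Bessel bound by duality.
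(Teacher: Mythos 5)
Your reduction to Proposition~\ref{Prop:stability23} and your treatment of the main term $A_n$ follow essentially the same route as the paper: Cauchy--Schwarz on $\int_{[\lambda_n,\mu_n]}f'$ against the weight $w_p$, hypothesis \eqref{eq0:thm-stability-Baranov-type} for the first factor, the Carleson property of $\nu=\sum_n\delta_{[\lambda_n,\mu_n]}$, and the Bernstein inequality \eqref{eq:mainineq}. The one structural difference is that the paper replaces $\kappa_{\mu_n}^b$ by the intermediate vectors $h_n^b=k_{\mu_n}^b/\|k_{\lambda_n}^b\|_b$ (legitimate since the norm ratio tends to $1$), so that $\langle f,\kappa_{\lambda_n}^b-h_n^b\rangle=(f(\lambda_n)-f(\mu_n))/\|k_{\lambda_n}^b\|_b$ exactly and your term $B_n$ never appears; you instead keep $\kappa_{\mu_n}^b$ and absorb the normalization discrepancy into $B_n$ via the Bessel bound, which is a correct (and equally valid) way to get the AOS conclusion. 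One small point you gloss over: when $\lambda_n$ or $\mu_n$ lies in $E_2(b)\subset\mathbb R$, writing $f(\lambda_n)-f(\mu_n)=\int_{[\lambda_n,\mu_n]}f'$ requires some care at the boundary; the paper invokes differentiability on $]\lambda_n,\mu_n[$ together with a density argument (functions continuous on $[\lambda_n,\mu_n]$ are dense in $\HH(b)$) to justify this.

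The genuine gap is in the complete AOB part. Your final constant is $\varepsilon_1'=2CK\varepsilon_1+2\eta_1^2C_1$ with $\eta_1=\sup_{n\geq 1}|1-\alpha_n/\beta_n|$. Condition (iii) only gives $\alpha_n/\beta_n\to 1$, so $\eta_1$ is merely a finite number determined by the geometry of the sets $G_n$; it is \emph{not} made small by shrinking $\varepsilon_1$. If, say, $\alpha_1/\beta_1=1/2$, then $\varepsilon_1'\geq C_1/2$ no matter how small $\varepsilon_1$ is, and the smallness threshold in the second clause of Proposition~\ref{Prop:stability23} is never reached. So the sentence ``once $\varepsilon_1$ is chosen small enough, $\varepsilon_1'$ falls below the threshold'' does not follow from what you proved. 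Two repairs are available: (a) adopt the paper's normalization $h_n^b$, for which the $B_n$ term vanishes identically, and then pass back to $\kappa_{\mu_n}^b$ by observing that completeness, minimality and the AOS property are preserved under multiplication by scalars $\alpha_n/\beta_n\to 1$; or (b) keep your decomposition but additionally estimate $\|k_{\mu_n}^b-k_{\lambda_n}^b\|_b=\sup_{\|f\|_b\leq 1}|f(\mu_n)-f(\lambda_n)|\leq K\sqrt{\varepsilon_1}\,\|k_{\lambda_n}^b\|_b$ by the very same Cauchy--Schwarz/Bernstein argument, which yields $\eta_1\lesssim\sqrt{\varepsilon_1}$ and closes the gap. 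As written, neither step is in your proof.
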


\begin{proof}
Let $h_{n}^b=\frac{k_{\mu_n}^b}{\|k_{\lambda_n}^b\|_b}$, $n\geq 1$. Since $\frac{\|k_{\mu_n}^b\|_b}{\|k_{\lambda_n}^b\|_b}\to 1$ as $n\to\infty$, we easily see that $(\kappa_{\mu_n}^b)_{n\geq 1}$ is an AOS if and only if $(h_n^b)_{n\geq 1}$ is an AOS. In view of Proposition~\ref{Prop:stability23}, it is then sufficient to check the estimate 
\begin{equation}\label{eq:thm-stability-Baranov-type}
\sum_{n\geq N}|\langle f,\kappa_{\lambda_n}^b-h_n^b\rangle|^2\leq \varepsilon_N \|f\|_b^2,\qquad f\in\HH(b).
\end{equation}
It follows from \eqref{eq0:thm-stability-Baranov-type} and \cite[Corollary 5.4]{baranov2010weighted} that any $f\in\HH(b)$ is differentiable in $]\lambda_n,\mu_n[$ and, the set of all functions in $\HH(b)$ which are continuous on $[\lambda_n,\mu_n]$ is dense in $\HH(b)$. Therefore, it is sufficient to prove \eqref{eq:thm-stability-Baranov-type} for functions $f\in\HH(b)$ continuous on $[\lambda_n,\mu_n]$. Then 
$$
|\langle f,\kappa_{\lambda_n}^b-h_n^b\rangle|^2=\frac{|f(\lambda_n)-f(\mu_n)|^2}{\|k_{\lambda_n}^b\|_b^2}=\frac{1}{\|k_{\lambda_n}^b\|_b^2}\left|\int_{[\lambda_n,\mu_n]}f'(z)\,dz\right|^2.
$$
By the Cauchy--Schwartz inequality and \eqref{eq0:thm-stability-Baranov-type}, we get for $n\geq N$, 
$$
|\langle f,\kappa_{\lambda_n}^b-h_n^b\rangle|^2\leq \varepsilon_N \int_{[\lambda_n,\mu_n]} |f'(z)w_p(z)|^2\,|dz|.
$$
It follows from assumption $(iii)$ that $\nu=\sum_{n}\delta_{[\lambda_n,\mu_n]}$ is a Carleson measure with a constant $C_\nu$ which does not exceed some absolute constant depending only on $G$. Hence according to \eqref{eq:mainineq}, we have
$$
\sum_{n\geq N}|\langle f,\kappa_{\lambda_n}^b-h_n^b\rangle|^2\leq \varepsilon_N \sum_{n\geq N} \int_{[\lambda_n,\mu_n]} |f'(z)w_p(z)|^2\,|dz|\leq \varepsilon_N \|f'w_p\|^2_{L^2(\nu)}\leq K \varepsilon_N \|f\|_b^2.
$$
Then, since $\varepsilon_N\to 0$ as $N\to \infty$, Proposition~\ref{Prop:stability23} implies that $(h_n^b)_{n\geq 1}$ is an AOS, and so is $(\kappa_{\mu_n}^b)_{n\geq 1}$. The second part of the result for complete AOB follows also from Proposition~\ref{Prop:stability23}. 

\end{proof}

\begin{remark}\rm{
If $(\kappa_{\lambda_n}^b)_{n\geq 1}$ is a complete AOB in $\HH(b)$ and $(\lambda_n)_{n\geq 1}\subset\mathbb{C}_+$, then it is sufficient to have \eqref{eq0:thm-stability-Baranov-type} for $N$ large enough to get that $(\kappa_{\mu_n}^b)_{n\geq 1}$ is a complete AOB in $\HH(b)$. Indeed, combine Theorem~\ref{prop1} applied to the sequence 
$$
\gamma_n=\begin{cases}
\lambda_n & \hbox{if }n\leq N\\
\mu_n & \hbox{if }n>N
\end{cases},
$$
with part (a) of the following lemma which shows that we can replace a finite number of terms keeping the minimality and completeness.
}
\end{remark}

\bl\label{Lem:add-minimality}
Let $b\in H^\infty_1$ and $\Lambda=(\lambda_n)_{n\geq 1}\subset\mathbb C_+$. 
\begin{enumerate}
\item[(a)] Assume that $(k_{\lambda_n}^b)_{n\geq 1}$ is a minimal and complete sequence in $\HH(b)$. Then, for every $\mu\in\mathbb{C}_+\setminus\Lambda$, the system $\{k_{\lambda_n}^b\}_{n\geq 2}\cup\{k_\mu^b\}$ is still minimal and complete in $\HH(b)$. 
\item[(b)] Assume that $(k_{\lambda_n}^b)_{n\geq 1}$ is not complete in $\HH(b)$. Then for every $\mu\in\mathbb C_+\setminus\Lambda$, the system $\{k_{\lambda_n}^b\}_{n\geq 1}\cup\{k_\mu^b\}$ is minimal. 
\end{enumerate}
\el

This result is proved in \cite{hruvsvcev1981unconditional} for the inner case. The general version is proved similarly; see \cite[Lemma 31.2]{fricain2014theory}. We also use later a version of this result for real frequencies. We do not know if it true in general but we prove it when $b=\Theta$ is an inner function. The proof uses the following key lemma. 
\bl\label{Lem-key-minimality-stability}
Let $\Theta$ be an inner function, $x_0\in\mathbb R\setminus\sigma(\Theta)$, and $f\in K_\Theta$ such that $f(x_0)=0$. Then there exists a Blaschke factor $J$ such that $\{J=-1\}=\{x_0\}$ and $f/(1+J)\in K_\Theta$.
\el
\begin{proof}
Fix any $a>0$ and define $\gamma=x_0+ia\in\mathbb{C}_+$. Then $J$ will be the Blaschke factor $b_\gamma$, that is 
$$
J(z)=b_\gamma(z)=\frac{z-\gamma}{z-\overline\gamma}.
$$
An easy computation shows that 
$$
1+J(z)=\frac{2(z-x_0)}{z-\overline\gamma},
$$
and in particular, we get that $\{J=-1\}=\{x_0\}$. To check that $f/(1+J)\in K_\Theta$, first note that 
$$
\frac{f(z)}{1+J(z)}=\frac{1}{2}\left(f(z)+ia\frac{f(z)}{z-x_0}\right).
$$
Since $x_0\in\mathbb{R}\setminus\sigma(\Theta)$, we know that $f$ is analytic in a neighbourhood $V_{x_0}$ of $x_0$ and thus we have 
$$ 
|f(z)|\leq C|z-x_0|,\qquad z\in V_{x_0}.
$$
Hence $f/(z-x_0)\in L^2(\mathbb R)\cap\mathcal N^+=H^2$, where $\mathcal N^+$ is the Smirnov class. We deduce that $f/(1+J)\in H^2$. It remains to notice that 
$$
\frac{\Theta \bar f}{1+\bar J}=\frac{J\Theta\bar f}{1+J},
$$
and since $f\in K_\Theta$, we have $\Theta \bar f\in H^2$. Thus $\Theta \bar f/(1+\bar J)\in L^2(\mathbb R)\cap\mathcal N^+=H^2$ and then $f/(1+J)\in H^2\cap \Theta \overline{H^2}=K_\Theta$.  

\end{proof}

\bl\label{Lem2:add-minimality}
Let $\Theta$ be an inner function and $(t_n)_{n\geq 1}\subset\mathbb{R}$. 
\begin{enumerate}
\item[(a)] Assume that $t_1\not\in\sigma(\Theta)$ and $(k_{t_n}^\Theta)_{n\geq 1}$ is a minimal and complete sequence in $K_\Theta$. Then, for every $t\in\mathbb{R}\setminus\sigma(\Theta)$ and $t\not=t_n$, $n\geq 1$, the system $\{k_{t_n}^\Theta\}_{n\geq 2}\cup\{k_t^\Theta\}$ is still minimal and complete in $K_\Theta$. 
\item[(b)] Assume that $t_n\not\in\sigma(\Theta)$, $n\geq 1$, and $(k_{t_n}^\Theta)_{n\geq 1}$ is not complete in $K_\Theta$.
Then for every $t\in\mathbb R\setminus\sigma(\Theta)$ and $t\not=t_n$, $n\geq 1$, the system $\{k_{t_n}^\Theta\}_{n\geq 1}\cup\{k_t^\Theta\}$ is minimal. 
\end{enumerate}
\el
\begin{proof}
(a): let us first prove that the system $\{k_{t_n}^\Theta\}_{n\geq 2}\cup\{k_t^\Theta\}$ is complete. So let $f\in K_\Theta$ $f(t_n)=0$, $n\geq 2$ and $f(t)=0$. According to Lemma~\ref{Lem-key-minimality-stability}, there is an inner function $J$ such that $\{J=-1\}=\{t\}$ and $f/(1+J)\in K_\Theta$. Consider now 
$$
g=\frac{J-J(t_1)}{1+J}f=f-(J(t_1)+1)\frac{f}{1+J}.
$$
The function $g$ belongs to $K_\Theta$ and it vanishes at $t_n$, $n\geq 1$. Hence, the completeness of $(k_{t_n}^\Theta)_{n\geq 1}$ implies that $g\equiv 0$ and thus $f\equiv 0$. That proves the completeness of $\{k_{t_n}^\Theta\}_{n\geq 2}\cup\{k_t^\Theta\}$. As far as concerned the minimality, note that for every $n\geq 1$, there exists a function $f_n\in K_\Theta$ such that $f_n(t_\ell)=\delta_{n,l}$. By completeness of $\{k_{t_n}^\Theta\}_{n\geq 2}\cup\{k_t^\Theta\}$, we necessarily have $f(t)\neq 0$. Then that proves that $k_t^\Theta\not\in\span(k_{t_n}^\Theta:n\geq 2)$. Fix now $n\geq 2$. Using once more time Lemma~\ref{Lem-key-minimality-stability}, there is an inner function $J_1$ such that $\{J_1=-1\}=\{t_1\}$ and $f_n/(1+J_1)\in K_\Theta$. Now consider the function $g_n=((J_1-J_1(t))f)/(1+J_1)$. It is clear that $g_n\in K_\Theta$. Moreover, we have $g_n(t)=0$, $g_n(t_\ell)=0$, $\ell\not=n$ and $g_n(t_n)=(J_1(t_n)-J_1(t))/(1+J_1(t_n))\neq 0$ (since $J_1$ is a Blaschke factor and thus is one-to-one). Hence we get that $k_{t_n}^\Theta\not\in\span(\{k_{t_\ell}^\Theta\}_{\ell\geq 2,\ell\not=n}\cup\{k_t^\Theta\})$. That proves the minimality of $\{k_{t_n}^\Theta\}_{n\geq 2}\cup\{k_t^\Theta\}$.

(b): since $(k_{t_n}^\Theta)_{n\geq 1}$ is not complete in $K_\Theta$, there exists a function $f\in K_\Theta$, $f\not\equiv 0$, such that $f(t_n)=0$, $n\geq 1$. Fix $n\geq 1$. By Lemma~\ref{Lem-key-minimality-stability}, there is a Blaschke factor $J_n$ such that $\{J_n=-1\}=\{t_n\}$ and $f/(1+J_n)\in K_\Theta$. Consider now the function $f_n=((J_n-J_n(t))f)/(1+J_n)$. Then $f_n\in K_\Theta$ and we have $f_n(t)=0$, $f_n(t_\ell)=0$, $\ell\not=n$. Dividing once more time by $1+J_n$ if necessary, we can assume that $f_n(t_n)\neq 0$. Hence we deduce that $k_{t_n}^\Theta\not\in\span(\{k_{t_\ell}^\Theta\}_{\ell\geq 1,\ell\neq n}\cup\{k_t^\Theta\})$. On the other hand, if $f(t)\neq 0$, then we immediately get that $k_t^\Theta\not\in\span(k_{t_n}^\Theta:n\geq 1)$. If $f(t)=0$, then we can use once more time Lemma~\ref{Lem-key-minimality-stability} to drop of that extra zero. That proves the minimality of $\{k_{t_n}^\Theta\}_{n\geq 1}\cup\{k_t^\Theta\}$.

\end{proof}

Let $\Theta$ be an inner function, $(\lambda_n)_n\subset\mathbb{C}_+$ satisfying $\sup_{n\geq 1}|\Theta(\lambda_n)|<1$. It is proved in \cite{chalendar2003functional} that if $(\kappa_{\lambda_n}^\Theta)_{n\geq 1}$ is an AOS, there exists $\varepsilon>0$ such that $(\kappa_{\mu_n}^b)_{n\geq 1}$ is an AOS for any sequence 
$(\mu_n)_{n\geq 1}\in\mathbb{C}_+$ satisfying 
$$
\left|\frac{\lambda_n-\mu_n}{\lambda_n-\overline{\mu_n}}\right|\leq \varepsilon.
$$
It is easy to see that this can be generalized to the general case when the inner function $\Theta$ is replaced by a function $b\in H^\infty_1$; see \cite{fricain2014theory}. Without the hypothesis that $\sup_{n\geq 1}|b(\lambda_n)|<1$, we obtain the following stability result concerning pseudo-hyperbolic perturbations.

\begin{corollary}\label{cor1-perturbation}
Let $b\in H^\infty_1$, $(\lambda_n)_{n\geq 1}\subset\mathbb{C}_+$ such that $(\kappa_{\lambda_n}^b)_{n\geq 1}$ is an AOS in $\HH(b)$. Let $\gamma>1/3$ and $(\varepsilon_n)_{n\geq 1}$ be a sequence of positive numbers tending to $0$. For every sequence $(\mu_n)_{n\geq 1}$ satisfying
\begin{equation}\label{eq:stability-distance-hyp}
\left|\frac{\lambda_n-\mu_n}{\lambda_n-\overline{\mu_n}}\right|\leq \varepsilon_n (1-|b(\lambda_n)|)^\gamma,\qquad n\geq 1,
\end{equation}
the sequence $(\kappa_{\mu_n}^b)_{n\geq 1}$ is an AOS. Moreover, if $(\kappa_{\lambda_n}^b)_{n\geq 1}$ is a complete AOB in $\HH(b)$ then $(\kappa_{\mu_n}^b)_{n\geq 1}$ is also a complete AOB in $\HH(b)$.
\end{corollary}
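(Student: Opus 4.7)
The plan is to deduce the corollary from Theorem~\ref{prop1} by producing sets $G_n$ containing $\mu_n$ that satisfy the geometric conditions (i)--(iii), and along which the integral bound \eqref{eq0:thm-stability-Baranov-type} holds via the Bernstein weight estimate \eqref{eq:estimate1-kernel-Bernstein}.

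First I would convert the pseudo-hyperbolic control \eqref{eq:stability-distance-hyp} into a Euclidean one. Using $|\lambda_n-\overline{\mu_n}|\leq 2\Im\lambda_n+|\lambda_n-\mu_n|$ and absorbing the $|\lambda_n-\mu_n|$ on the right (permissible since the right-hand side of \eqref{eq:stability-distance-hyp} is at most $\varepsilon_n\to 0$), I obtain $|\lambda_n-\mu_n|\leq C\varepsilon_n(1-|b(\lambda_n)|)^\gamma\Im\lambda_n$. Setting $\eta_n:=C\varepsilon_n(1-|b(\lambda_n)|)^\gamma\to 0$ and
$$
G_n:=\{z\in\mathbb{C}_+:|z-\lambda_n|<\eta_n\Im\lambda_n\},
$$
I get $\mu_n\in G_n$, and Corollary~\ref{Cor:existence-set-Gn} ensures (i)--(iii) hold.

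The main step is the integral estimate. Fix $p\in(1,2)$ with conjugate exponent $q=p/(p-1)$; then $2p/(q(p+1))=2(p-1)/(p+1)$. From \eqref{eq:estimate1-kernel-Bernstein} I obtain $w_p(z)^{-2}\leq C(1-|b(z)|)^{2(p-1)/(p+1)}(\Im z)^{-2}$. Along $[\lambda_n,\mu_n]$ the pseudo-hyperbolic distance from $\lambda_n$ to $z$ is at most $\eta_n\to 0$, so the Schwarz--Pick argument used in Lemma~\ref{Lemma:stability-condition-ii} makes $1-|b(z)|$ and $\Im z$ comparable to their values at $\lambda_n$, uniformly in $n$. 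Combining this with $|\lambda_n-\mu_n|\leq\eta_n\Im\lambda_n$ and $\|k_{\lambda_n}^b\|_b^2\asymp(1-|b(\lambda_n)|)/\Im\lambda_n$ produces
$$
\frac{1}{\|k_{\lambda_n}^b\|_b^2}\int_{[\lambda_n,\mu_n]}w_p(z)^{-2}\,|dz|\leq C'\varepsilon_n(1-|b(\lambda_n)|)^{\gamma-1+2(p-1)/(p+1)}.
$$
The hard part is the exponent arithmetic: as $p\to 2^-$ the quantity $\gamma-1+2(p-1)/(p+1)$ tends to $\gamma-\tfrac{1}{3}$, which is positive by hypothesis, so for $p$ sufficiently close to $2$ the exponent is non-negative. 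Then $(1-|b(\lambda_n)|)$ raised to this exponent is bounded by $1$, and the whole expression is at most $C'\varepsilon_n$. Taking the supremum over $n\geq N$ gives \eqref{eq0:thm-stability-Baranov-type} with a sequence tending to zero, and Theorem~\ref{prop1} delivers that $(\kappa_{\mu_n}^b)_{n\geq 1}$ is an AOS.

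For the completeness addendum, since $(\lambda_n)\subset\mathbb{C}_+$, I would invoke the remark following Theorem~\ref{prop1}: apply that theorem to the hybrid sequence coinciding with $\lambda_n$ for $n\leq N$ and with $\mu_n$ for $n>N$, for $N$ large enough that the tail constant is small. This yields a complete AOB, after which Lemma~\ref{Lem:add-minimality}(a) is applied $N$ times to swap $\lambda_1,\dots,\lambda_N$ for $\mu_1,\dots,\mu_N$ while preserving minimality and completeness, so that $(\kappa_{\mu_n}^b)_{n\geq 1}$ is itself a complete AOB.
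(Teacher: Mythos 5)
Your proposal is correct and follows essentially the same route as the paper: reduce the pseudo-hyperbolic bound to $|\lambda_n-\mu_n|\lesssim \varepsilon_n(1-|b(\lambda_n)|)^\gamma\Im\lambda_n$, invoke Corollary~\ref{Cor:existence-set-Gn} for the sets $G_n$, bound $w_p^{-2}$ via \eqref{eq:estimate1-kernel-Bernstein} together with the comparability of $1-|b|$ and $\Im z$ along $[\lambda_n,\mu_n]$, and finish the AOB claim with the hybrid-sequence trick plus Lemma~\ref{Lem:add-minimality}. The only (immaterial) difference is that the paper picks $p$ so that the exponent $\gamma-1+2(p-1)/(p+1)$ vanishes exactly, whereas you take $p$ close to $2$ to make it non-negative and bound the resulting power of $1-|b(\lambda_n)|$ by $1$.
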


\begin{proof}
According to Corollary~\ref{Cor:existence-set-Gn}, the sets
$G_n=\{z\in\mathbb{C}_+:|z-\lambda_n|\leq \varepsilon_n\Im\lambda_n\}$
satisfy the conditions $(i)$, $(ii)$ and $(iii)$. Let
$(\mu_n)_{n\geq 1}$ satisfy \eqref{eq:stability-distance-hyp}. Then, we have
\begin{eqnarray}\label{eq4:stabilite}
|\lambda_n-\mu_n|\leq
\varepsilon_n (1-|b(\lambda_n)|)^\gamma\Im\lambda_n\leq \varepsilon_n \Im\lambda_n .
\end{eqnarray}
Therefore, $\mu_n\in G_n$.
Without loss of generality, we can assume that $\gamma<1$, and since
$\gamma>1/3$, there exists $1<p<2$ such that
$2\frac{p-1}{p+1}=1-\gamma$.
Let $q$ be the conjugate exponent of $p$ and note that
$\frac{2p}{q(p+1)}=1-\gamma$. Using \eqref{eq:estimate1-kernel-Bernstein}, \eqref{eq2:stability-result-ensemble-Gn} and \eqref{eq3:stability-result-ensemble-Gn} we have
\[
w_p^{-2}(z)\leq C_1\frac{(1-|b(\lambda_n)|)^{1-\gamma}}{(\Im \lambda_n)^{2}}
\]
for $z\in [\lambda_n,\mu_n]$. Hence,
\[
\frac{1}{\|k_{\lambda_n}^b\|_b^2}\int_{[\lambda_n,\mu_n]}w_p(z)^{-2}|dz|\leq
C_2\frac{\Im\lambda_n}{1-|b(\lambda_n)|}|\lambda_n-\mu_n|\frac{(1-|b(\lambda_n)|)^{1-\gamma}}
{(\Im\lambda_n)^2}
\]
and using (\ref{eq4:stabilite}), we obtain
\[
\frac{1}{\|k_{\lambda_n}^b\|_b^2}\int_{[\lambda_n,\mu_n]}w_p(z)^{-2}|dz|\leq
C_3\varepsilon_n.
\]
The conclusion for AOS now follows from Theorem \ref{prop1}.

For complete AOB, we argue as follows. Let 
$$
\gamma_n=\begin{cases}
\lambda_n & \hbox{if }n<N_0\\
\mu_n & \hbox{if }n\geq N_0
\end{cases},
$$
where $N_0$ will be choosen later. Then, the preceeding computations show that 
$$
\sup_{n\geq 1}\frac{1}{\|k_{\lambda_n}^b\|_b^2}\int_{[\lambda_n,\mu_n]}w_p(z)^{-2}|dz|\leq
C_3\sup_{n\geq N_0}\varepsilon_n.
$$
Hence we can find $N_0$ such that $C_3\sup_{n\geq N_0}\varepsilon_n$ is sufficiently small so that according to Theorem~\ref{prop1}, we will get that $(\kappa_{\gamma_n}^b)_{n\geq 1}$ is a complete AOB in $\HH(b)$. Then, we can apply Lemma~\ref{Lem:add-minimality} to get that $(\kappa_{\mu_n}^b)_{n\geq 1}$ is a complete and minimal sequence in $\HH(b)$. Since it is also an AOS, then it is a complete AOB. 

\end{proof}

\begin{remark}\rm{
Note that in the case when $\lim_{n\to\infty}|b(\lambda_n)|=1$, the condition \eqref{eq:stability-distance-hyp} can be replaced by the existence of a constant $C>0$ such that 
$$
\left|\frac{\lambda_n-\mu_n}{\lambda_n-\overline{\mu_n}}\right|\leq C (1-|b(\lambda_n)|)^\gamma,\qquad n\geq 1.
$$
Indeed, it is sufficient to take $\gamma>\gamma_0>1/3$ and note that 
$$
C(1-|b(\lambda_n)|)^\gamma=\varepsilon_n (1-|b(\lambda_n)|)^{\gamma_0},
$$
with $\varepsilon_n=C(1-|b(\lambda_n)|)^{\gamma-\gamma_0}\to 0$ as $n\to\infty$.}
 
\end{remark}
In the inner case, we can also give a stability result when the sequences $(\lambda_n)_n$ and $(\mu_n)_n$ are on the real line. We first need a result on the construction of sets $G_n$. 

\bl\label{Lem:technical-existence-set-Gn-real-spectrum}
Let $\Theta$ be an inner function, let $t_n\in\R$ such that $(\kappa_{t_n}^\Theta)_{n\geq 1}$ is a Riesz basis of $K_\Theta$. Let $(\varepsilon_n)_{n\geq 1}$ be a sequence of positive numbers tending to $0$. Define 
\begin{equation}\label{eq:definition-Gn-real-spectrum}
G_n=\{t\in\R:|t-t_n|\leq \varepsilon_n v_0(t_n)\},
\end{equation}
where $v_0(t)=\min(d_0(t),|\Theta'(t)|^{-1})$ and $d_0(t)={\dist}(t,\sigma(\Theta))$. Then the set $G_n$ satisfies $(i), (ii)$ and $(iii)$. 

\el

\begin{proof}
Consider the nontrivial case when $v_0(t_n)>0$. In particular, we have 
$$
|t-t_n|\leq \varepsilon_n d_0(t_n),\qquad t\in G_n.
$$
Hence
\begin{equation}\label{eq:Lem:technical-existence-set-Gn-real-spectrum}
(1-\varepsilon_n) d_0(t_n)\leq d_0(t)\leq (1+\varepsilon_n) d_0(t_n),\qquad t\in G_n. 
\end{equation}
Now remember that when $t\in\R$, $k_t^\Theta\in K_\Theta$ if and only if 
$$
|\Theta'(t)|=a+\sum_{\ell=1}^\infty \frac{2\Im z_\ell}{|t-z_\ell|^2}+\int_\R \frac{d\sigma(x)}{|t-x|^2}<\infty,
$$
and in that case 
\begin{equation}\label{eq2:Lem:technical-existence-set-Gn-real-spectrum}
\|k_t^\Theta\|_2^2=|\Theta'(t)|.
\end{equation}
Here $(z_\ell)_\ell$ is the sequence of zeros of $\Theta$ and $\sigma$ is its associated singular measure. Using \eqref{eq:Lem:technical-existence-set-Gn-real-spectrum}, it is not difficult to check that for every $\ell\geq 1$ and $t\in G_n$, 
$$
1-\varepsilon_n\leq \frac{|t-z_\ell|}{|t_n-z_\ell|} \leq 1+\varepsilon_n,
$$
and for any $x\in\supp\sigma$, 
$$
1-\varepsilon_n\leq \frac{|t-x|}{|t_n-x|} \leq 1+\varepsilon_n.
$$
Hence 
\begin{equation}\label{eq:derivee-theta-perturbation}
\frac{1}{(1+\varepsilon_n)^2}|\Theta'(t_n)|\leq |\Theta'(t)|\leq \frac{1}{(1-\varepsilon_n)^2}|\Theta'(t_n)|.
\end{equation}
It then follows from \eqref{eq2:Lem:technical-existence-set-Gn-real-spectrum} that 
$$
\frac{1}{1+\varepsilon_n}\leq \frac{\|k_t^\Theta\|_2}{\|k_{t_n}^\Theta\|_2}\leq \frac{1}{1-\varepsilon_n},
$$
and we get that $G_n$ satisfies condition $(ii)$. Condition $(i)$ is trivial and condition $(iii)$ follows along the same line as in \cite[Lemma 5.1]{baranov2005stability}. More precisely, using an increasing continuous branch of the argument of $\Theta$ on $G_n$ (note that $\sigma(\Theta)\cap G_n=\emptyset$), it is proved that for $t\in G_n$, we have
\begin{equation}\label{eq:estimate-fine-repro-baranov}
k_t^\Theta(t_n)\geq \frac{|\Theta'(t_n)|}{8\pi^2}.
\end{equation}
Now using the fact that 
$$
\sum_{n\geq 1}\frac{|k_t^\Theta(t_n)|^2}{|\Theta'(t_n)|}=\sum_{n\geq 1}|\langle k_t^\Theta,\kappa_{t_n}^\Theta\rangle|^2\leq C \|k_t^\Theta\|_2^2=C|\Theta'(t)|
$$
we see that the number of integers $n$ such that $t\in G_n$ is uniformly bounded. Hence $(iii)$ is satisfied. 

\end{proof}

\begin{remark}\rm{
It is natural to ask if Lemma~\ref{Lem:technical-existence-set-Gn-real-spectrum} is satisfied when we replace the inner function $\Theta$ by a general function $b$ in the unit ball of $H^\infty$. The difficulty is indeed to get the estimate \eqref{eq:estimate-fine-repro-baranov}.}

\end{remark}

\begin{theorem}\label{Thm:stability-real-frequencies}
Let $\Theta$ be an inner function, let $t_n\in\R$ such that $(\kappa_{t_n}^\Theta)_{n\geq 1}$ is a complete AOB in $K_\Theta$, and let $(s_n)_{n\geq 1}$ be a sequence of real numbers. Suppose there exists $N_0$ such that for all $n\geq N_0$, there is $\varepsilon_n>0$ verifying
\begin{equation}\label{Eq1-Thm:stability-real-frequencies}
\int_{[t_n,s_n]}\left(|\Theta'(t)|+|\Theta'(t)|^{-1}d_0^{-2}(t)\right)\,dt\leq \varepsilon_n
\end{equation}
or
\begin{equation}\label{Eq2-Thm:stability-real-frequencies}
|s_n-t_n|\leq \varepsilon_n |\Theta'(t_n)|\min(d_0^2(t_n),|\Theta'(t_n)|^{-2}),
\end{equation}
and $\lim_{n\to \infty}\varepsilon_n=0$. Then 
$(\kappa_{s_n}^\Theta)_{n\geq 1}$ is a complete AOB in $K_\Theta$. 
\end{theorem}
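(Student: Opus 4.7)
The plan is to transcribe the argument of Theorem~\ref{prop1} to the real line, using the sets $G_n$ from Lemma~\ref{Lem:technical-existence-set-Gn-real-spectrum} and the sharp inner-case weight bound \eqref{eq:estimate2-kernel-Bernstein}. Setting $h_n := k_{s_n}^\Theta/\|k_{t_n}^\Theta\|_2$, Lemma~\ref{Lem:technical-existence-set-Gn-real-spectrum}(ii) gives $\|k_{s_n}^\Theta\|_2/\|k_{t_n}^\Theta\|_2 \to 1$, so it suffices, by Proposition~\ref{Prop:stability23}, to produce $\eta_N \to 0$ with
\[
\sum_{n \geq N}\left|\langle f, \kappa_{t_n}^\Theta - h_n\rangle\right|^2 \leq \eta_N \|f\|_2^2, \qquad f \in K_\Theta.
\]
Once this is in hand, the complete AOB conclusion follows exactly as in Corollary~\ref{cor1-perturbation}: replace finitely many $s_n$ by $t_n$ so that the relevant supremum becomes sufficiently small, apply the second part of Proposition~\ref{Prop:stability23} to the modified sequence, and then restore the $s_n$ one at a time via Lemma~\ref{Lem2:add-minimality}(a).

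Next I would locate $s_n$ inside a suitable $G_n := \{t \in \R : |t - t_n| \leq \eta_n v_0(t_n)\}$ for some $\eta_n \to 0$. Under \eqref{Eq2-Thm:stability-real-frequencies}, a short case analysis shows
\[
|\Theta'(t_n)|\min(d_0^2(t_n), |\Theta'(t_n)|^{-2}) \leq v_0(t_n),
\]
so $s_n \in G_n$ with $\eta_n = \varepsilon_n$. Under \eqref{Eq1-Thm:stability-real-frequencies}, finiteness of the integrand forces $\sigma(\Theta) \cap [t_n, s_n] = \emptyset$, and a bootstrap based on the pointwise lower bound of the integrand at $t_n$ (which is at least a constant times $v_0^{-1}(t_n)$) yields $|s_n - t_n| = o(v_0(t_n))$, so again $s_n \in G_n$ for appropriate $\eta_n \to 0$. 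By Lemma~\ref{Lem:technical-existence-set-Gn-real-spectrum}, these $G_n$ satisfy conditions (i), (ii), (iii), and in particular $|\Theta'(x)| \asymp |\Theta'(t_n)|$ and $d_0(x) \asymp d_0(t_n)$ uniformly for $x \in [t_n, s_n]$, via \eqref{eq:Lem:technical-existence-set-Gn-real-spectrum} and \eqref{eq:derivee-theta-perturbation}.

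Fix $1 < p < 2$. For $f \in K_\Theta$ continuous on each $[t_n, s_n]$ (a dense class by \cite[Corollary~5.4]{baranov2010weighted}), using $\|k_{t_n}^\Theta\|_2^2 = |\Theta'(t_n)|$ and Cauchy--Schwarz applied to $f' = (f'w_p)\cdot w_p^{-1}$,
\[
\left|\langle f, \kappa_{t_n}^\Theta - h_n\rangle\right|^2 \leq \frac{1}{|\Theta'(t_n)|}\int_{[t_n, s_n]} w_p^{-2}(x)\,dx\cdot \int_{[t_n, s_n]}|(f'w_p)(x)|^2\,dx.
\]
The weight bound \eqref{eq:estimate2-kernel-Bernstein} yields $w_p^{-2} \leq C v_0^{-2} \leq C(d_0^{-2} + |\Theta'|^2)$, and the comparabilities above convert the first factor into
\[
\frac{1}{|\Theta'(t_n)|}\int_{[t_n, s_n]} w_p^{-2}(x)\,dx \leq C\int_{[t_n, s_n]}\left(|\Theta'(x)| + |\Theta'(x)|^{-1}d_0^{-2}(x)\right)\,dx,
\]
which is $\leq C\varepsilon_n$ directly under \eqref{Eq1-Thm:stability-real-frequencies}, and also $\leq C\varepsilon_n$ under \eqref{Eq2-Thm:stability-real-frequencies} via the elementary pointwise identity $|\Theta'(t)|\min(d_0^2(t), |\Theta'(t)|^{-2}) \cdot (|\Theta'(t)| + |\Theta'(t)|^{-1}d_0^{-2}(t)) \leq 2$ together with the length bound on $[t_n, s_n]$. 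Since condition (iii) makes $\nu := \sum_n \delta_{[t_n, s_n]}$ a Carleson measure with uniform constant, summing over $n \geq N$ and applying \eqref{eq:mainineq} produces $\eta_N = CK\sup_{n \geq N}\varepsilon_n \to 0$, as desired.

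The principal obstacle is the self-consistent verification of $[t_n, s_n] \subset G_n$ (with its attendant pointwise comparabilities of $d_0$ and $|\Theta'|$) when only hypothesis \eqref{Eq1-Thm:stability-real-frequencies} is in force, since there $|s_n - t_n|$ is controlled only implicitly by an integral whose integrand itself depends on $d_0$ and $|\Theta'|$. The bootstrap sketched above is delicate but self-contained; everything else is a direct real-line transcription of the arguments used in the proofs of Theorem~\ref{prop1} and Corollary~\ref{cor1-perturbation}.
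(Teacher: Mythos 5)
Your proposal is correct and follows essentially the same route as the paper: reduce to Theorem~\ref{prop1} via the sets $G_n$ of Lemma~\ref{Lem:technical-existence-set-Gn-real-spectrum}, bound $w_p^{-2}\lesssim v_0^{-2}=\max(d_0^{-2},|\Theta'|^2)$ using \eqref{eq:estimate2-kernel-Bernstein}, verify \eqref{eq0:thm-stability-Baranov-type} under either hypothesis (with the same pointwise identity under \eqref{Eq2-Thm:stability-real-frequencies}), and recover completeness and minimality by swapping finitely many points via Lemma~\ref{Lem2:add-minimality}. The one step you leave vague---placing $s_n$ in $G_n$ under \eqref{Eq1-Thm:stability-real-frequencies}---is settled in the paper without any bootstrap: since the integrand dominates $v_0^{-1}$ pointwise, the mean value theorem yields a point $u_n\in[t_n,s_n]$ with $|s_n-t_n|\le\varepsilon_n v_0(u_n)$, and then $v_0(u_n)\le 4v_0(t_n)$ once $\varepsilon_n<1/2$.
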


\begin{proof}
We can of course assume that $s_n\neq t_n$ and $\varepsilon_n<1/2$. Both 
\eqref{Eq1-Thm:stability-real-frequencies} and 
\eqref{Eq2-Thm:stability-real-frequencies} imply that there exists a point $u_n\in [s_n,t_n]$ such that 
$$
|s_n-t_n|\leq \varepsilon_n v_0(u_n).
$$
Then $v_0(u_n)\leq 4 v_0(t_n)$ and $|s_n-t_n|\leq 4 \varepsilon_n v_0(t_n)$. In particular, $s_n\in G_n$ where $G_n$ is defined in \eqref{eq:definition-Gn-real-spectrum} (replacing $\varepsilon_n$ by $4\varepsilon_n$). Moreover, 
using \eqref{eq:estimate1-kernel-Bernstein} and \eqref{eq:derivee-theta-perturbation}, we can write 
\begin{eqnarray*}
\frac{1}{\|k_{t_n}^\Theta\|_2^2}\int_{[t_n,s_n]}w_p^{-2}(z)\,|dz| \lesssim &  \int_{[t_n,s_n]}|\Theta'(t)|^{-1} \max(d_0^{-2}(t),|\Theta'(t)|^2)\,dt \\
\lesssim &\int_{[t_n,s_n]}(|\Theta'(t)|^{-1}d_0^{-2}(t)+|\Theta'(t)|)\,dt \lesssim \varepsilon_n.
\end{eqnarray*}
Then applying Lemma~\ref{Lem:technical-existence-set-Gn-real-spectrum} and  Theorem~\ref{prop1}, we get that $(\kappa_{s_n}^\Theta)_{n\geq 1}$ is an AOS. It remains to prove the completeness and the minimality. We argue as in the proof of Corollary~\ref{cor1-perturbation} replacing Lemma~\ref{Lem:add-minimality} by Lemma~\ref{Lem2:add-minimality}. More precisely, define 
$$
x_n=\begin{cases}
t_n & \hbox{if }n<N_0\\
s_n & \hbox{if }n\geq N_0
\end{cases},
$$
where $N_0$ will be choosen later. Then, we have
$$
\sup_{n\geq 1}\frac{1}{\|k_{t_n}^\Theta\|_2^2}\int_{[t_n,x_n]}w_p^{-2}(z)\,|dz| \lesssim \sup_{n\geq N_0}\varepsilon_n. 
$$
Thus we can find $N_0$ such that according to Theorem~\ref{prop1}, the sequence $(k_{x_n}^\Theta)_{n\geq 1}$ is a complete AOB in $K_\Theta$. Note that if $t_n\in\sigma(\Theta)$, then $v_0(t_n)=0$ and then $s_n=t_n$ and if $t_n\not\in\sigma(\Theta)$, then $G_n\subset \mathbb R\setminus\sigma(\Theta)$ and then $s_n\not\in\Theta$. Hence we can apply Lemma~\ref{Lem2:add-minimality} to get that $(\kappa_{s_n}^\Theta)_{n\geq 1}$ is minimal and complete in $K_\Theta$.  

\end{proof}

We also give an analogue of a result of Cohn \cite{cohn1986carleson} who studied small perturbations with respect to the change of the argument of the inner function $\Theta$. First, we need to introduce some more definition. An inner function $\Theta$ in $\mathbb{C}_+$ is said to be a \emph{meromorphic inner function} if it has a meromorphic extension to $\mathbb C$. In that case, we know that the argument of $\Theta$ is a real analytic increasing function on $\mathbb{R}$. Moreover, we say that an inner function $\Theta$ satisfies the \emph{connected level set condition} (abbreviated $\Theta\in (CLS)$) if there is $\delta\in (0,1)$ such that the set $\Omega(\Theta,\delta)=\{z\in\mathbb C_+:|\Theta(z)|<\delta\}$ is connected.  

\begin{corollary}\label{Cor:MIF-CLS-Stability-result}
Let $\Theta$ be a meromorphic inner function such that $\Theta\in (CLS)$, let $\varphi$ be its argument and let $t_n\in\R$ such that $(\kappa_{t_n}^\Theta)_{n\geq 1}$ is a complete AOB in $K_\Theta$. Let $(\varepsilon_n)_{n\geq 1}$ be a sequence of positive numbers tending to $0$. If
$$
|\varphi(s_n)-\varphi(t_n)|\leq \varepsilon_n,
$$
then $(\kappa_{s_n}^\Theta)_{n\geq 1}$ is a complete AOB in $K_\Theta$.
\end{corollary}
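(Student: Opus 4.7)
The plan is to reduce the statement to Theorem~\ref{Thm:stability-real-frequencies}, specifically by verifying the integral condition \eqref{Eq1-Thm:stability-real-frequencies}. Since $\Theta$ is meromorphic and inner, its boundary values on $\mathbb{R}$ take the form $\Theta(t) = e^{i\varphi(t)}$ with $\varphi$ real analytic and monotone increasing; differentiating yields $\varphi'(t) = |\Theta'(t)|$. Hence the hypothesis can be rewritten as
$$
\int_{[t_n,s_n]} |\Theta'(t)|\,dt = |\varphi(s_n)-\varphi(t_n)| \leq \varepsilon_n,
$$
which handles the first half of the integrand in \eqref{Eq1-Thm:stability-real-frequencies}.

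For the second half, the goal is to dominate $|\Theta'(t)|^{-1}d_0^{-2}(t)$ pointwise by a constant multiple of $|\Theta'(t)|$. I would invoke the geometric meaning of the weight $v_0$ already recalled in Section 2, namely $v_0(x) \asymp \dist(x,\Omega(\Theta,\delta))$. Under the $(CLS)$ hypothesis, the sublevel set $\Omega(\Theta,\delta)$ is connected, and a classical argument (essentially due to Aleksandrov) gives $\dist(x,\Omega(\Theta,\delta)) \asymp |\Theta'(x)|^{-1}$ for $x\in\mathbb{R}$. Combining these two estimates yields $v_0(x) \asymp |\Theta'(x)|^{-1}$; since $v_0 \leq d_0$, this produces the key inequality $d_0(x) \geq c\,|\Theta'(x)|^{-1}$, or equivalently
$$
|\Theta'(t)|^{-1} d_0^{-2}(t) \leq C\,|\Theta'(t)|, \qquad t\in\mathbb{R}.
$$

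Assembling the two estimates gives
$$
\int_{[t_n,s_n]} \bigl(|\Theta'(t)| + |\Theta'(t)|^{-1} d_0^{-2}(t)\bigr)\,dt \leq (1+C)\varepsilon_n = \varepsilon'_n,
$$
with $\varepsilon'_n \to 0$. Hypothesis \eqref{Eq1-Thm:stability-real-frequencies} is therefore satisfied for all sufficiently large $n$, and a direct application of Theorem~\ref{Thm:stability-real-frequencies} produces the conclusion: $(\kappa_{s_n}^\Theta)_{n\geq 1}$ is a complete AOB in $K_\Theta$.

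The main obstacle is the equivalence $\dist(x,\Omega(\Theta,\delta)) \asymp |\Theta'(x)|^{-1}$ under $(CLS)$. It is not a formal consequence of the definitions and uses the connectedness of the sublevel set in an essential way to control the argument $\varphi$ on the portion of $\mathbb{R}$ facing $\Omega(\Theta,\delta)$; the rest of the proof is bookkeeping. Once this estimate is cited (or briefly justified via the local behaviour of $\varphi$), the remainder of the argument reduces to a short substitution into Theorem~\ref{Thm:stability-real-frequencies}.
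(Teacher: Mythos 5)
Your overall architecture is the same as the paper's: rewrite $\int_{[t_n,s_n]}|\Theta'(t)|\,dt=|\varphi(s_n)-\varphi(t_n)|\leq\varepsilon_n$ (correct, since $\varphi'=|\Theta'|$ on $\mathbb{R}$ for a meromorphic inner function), dominate the second term of the integrand in \eqref{Eq1-Thm:stability-real-frequencies} by the first via an inequality of the form $d_0(t)\geq c\,|\Theta'(t)|^{-1}$, and then invoke Theorem~\ref{Thm:stability-real-frequencies}. The gap is exactly at the step you flag as the ``main obstacle''. The equivalence $\dist(x,\Omega(\Theta,\delta))\asymp|\Theta'(x)|^{-1}$ for \emph{all} $x\in\mathbb{R}$ is not a consequence of the $(CLS)$ condition alone, and is false in general: the inequality that always holds is $\dist(x,\Omega(\Theta,\delta))\asymp v_0(x)\leq|\Theta'(x)|^{-1}$, which goes the wrong way, and the reverse inequality fails whenever $d_0(x)\ll|\Theta'(x)|^{-1}$. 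For instance, a single Blaschke factor $b_i(z)=(z-i)/(z+i)$ is $(CLS)$ (its sublevel set is a disk), yet for large real $x$ one has $d_0(x)\asymp|x|$ while $|\Theta'(x)|^{-1}\asymp x^2$, so $|\Theta'(x)|^{-1}\leq Cd_0(x)$ fails. So your key pointwise bound cannot be obtained from connectedness of the sublevel set alone.

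What the paper actually uses (citing Remark 1, p.~2419 of Baranov's stability paper) is the weaker, localized statement: \emph{because} $\Theta\in(CLS)$ \emph{and} $(\kappa_{t_n}^\Theta)_{n}$ is an AOB, there is $C>0$ with $|\Theta'(t)|^{-1}\leq Cd_0(t)$ for $t\in G_n$, i.e., only in the controlled neighbourhoods of the basis points $t_n$ that contain the intervals $[t_n,s_n]$. The AOB/Riesz-basis hypothesis is essential here (it pins the $t_n$ down to the region where $v_0\asymp|\Theta'|^{-1}$); it is not mere decoration, and your argument discards it. To repair the proof you should replace your global Aleksandrov-type claim by this localized estimate on $G_n$, after which the rest of your computation, namely
\[
\int_{[t_n,s_n]}\bigl(|\Theta'(t)|+|\Theta'(t)|^{-1}d_0^{-2}(t)\bigr)\,dt\lesssim\int_{[t_n,s_n]}|\Theta'(t)|\,dt=|\varphi(s_n)-\varphi(t_n)|\leq\varepsilon_n,
\]
goes through verbatim and Theorem~\ref{Thm:stability-real-frequencies} applies.
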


\begin{proof}
As noted in \cite[Remark 1, page 2419]{baranov2005stability}, since $\Theta$ is (CLS) and $(\kappa_{t_n}^\Theta)_n$ is an AOB, there exits a constant $C>0$ such that 
$$
|\Theta'(t)|^{-1}\leq C d_0(t),\qquad t\in G_n.
$$
Therefore 
\begin{eqnarray*}
\int_{[t_n,s_n]}(|\Theta'(t)|+|\Theta'(t)|^{-1}d_0^{-2}(t))\,dt \lesssim & \int_{[t_n,s_n]} |\Theta'(t)|\,dt\\
&=|\varphi(t_n)-\varphi(s_n)|\leq \varepsilon_n.
\end{eqnarray*}
Then apply Theorem~\ref{Thm:stability-real-frequencies}.
\end{proof}

\begin{example}\rm{
Let $\Theta_a(z)=e^{iaz}$, $a>0$, and fix $\alpha\in [0,2\pi)$. Then $\Theta_a^{-1}(\{e^{i\alpha}\})=\{t_n:n\in\mathbb{Z}\}$, with $t_n=(\alpha+2n\pi)/a$.  Then $(\kappa_{t_n}^{\Theta_a})_{n\in\Z}$ is an orthonormal basis of $K_{\Theta_a}$, the so--called Clark basis. Thus Corollary~\ref{Cor:MIF-CLS-Stability-result} says that if 
$$
\lim_{n\to \pm\infty}\left|s_n-\frac{\alpha+2n\pi}{a}\right|=0,
$$
then $(\kappa_{s_n}^{\Theta_a})_{n\in\Z}$ is a complete AOB for $K_{\Theta_a}$. }

\end{example}

\section{Example of exponential systems}
In the particular case where $\Theta_a(z)=e^{iaz}$, the Fourier transform $\mathcal F$ maps unitarily $K_{\Theta_a}$ onto $L^2(0,a)$ and $\mathcal F(\kappa_\lambda^{\Theta_a})=\chi_\lambda^a$, where 
$$
\chi^a_{\lambda}(t) = \left(\frac{2\Im \lambda}{1-e^{-2a\Im \lambda}}\right)^{1/2} e^{i\lambda t},\qquad \lambda\in\mathbb{C}_+.
$$
Thus, the geometric properties (completeness, minimality, Riesz basis, AOS, AOB,..) of system of normalized reproducing kernels $(\kappa_{\lambda_n}^{\Theta_a})_n$ in $K_{\Theta_a}$ and of normalized exponentials system $(\chi_{\lambda_n}^a)_n$ in $L^2(0,a)$ are the same. In \cite{chalendar2003functional}, AOS (or AOB) formed by reproducing kernels $k_{\lambda_n}^\Theta$ are studied under the additional condition that 
\begin{equation}\label{eq:Hypothese-hruvsvcev1981unconditional-sup}
\sup_{n\geq 1}|\Theta(\lambda_n)|<1.
\end{equation}
In the particular case when $\Theta=\Theta_a$, the condition 
\eqref{eq:Hypothese-hruvsvcev1981unconditional-sup} is equivalent to 
\begin{equation}\label{eq:condition-exponential-frequencies}
\inf_{n\geq 1}(\Im\lambda_n)>0.
\end{equation}
Under that assumption, it is proved in 
\cite[Proposition 7.2]{chalendar2003functional} that $(\chi_{\lambda_n}^a)_n$ is an AOB in $L^2(0,a)$ if and only if $(\lambda_n)_n$ is a thin sequence, which means that 
$$
\lim_{n\to \infty}\prod_{k\neq n}\left|\frac{\lambda_k-\lambda_n}{\lambda_k-\overline{\lambda_n}} \right|=1.
$$

Using Proposition~\ref{cns-general-AOB}, we construct a class of example of AOS where \eqref{eq:Hypothese-hruvsvcev1981unconditional-sup} (or equivalently \eqref{eq:condition-exponential-frequencies}) is not necessarily satisfied.

\begin{proposition}\label{prop:exponential-sup-egale1}
Let $(\lambda_n)_{n\geq 1} \subset \mathbb C$ be a sequence such that \\
(i) $\sup_n |\Im \lambda_n| < \infty$, \\
(ii) there exists a $q>1$ such that $\left|\frac{\lambda_{n+1}}{\lambda_n}\right|> q$ for all $n\geq 1$, \\
Then the sequence $(\chi^a_{\lambda_n})_{n\geq 1}$ is an AOS in $L^2(0,a)$ for all $a>0$.
\end{proposition}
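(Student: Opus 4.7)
My plan is to apply the sufficient condition from Proposition~\ref{cns-general-AOB}(a). That is, I will show that the Gram matrix $\Gamma_{n,p}=\langle \chi_{\lambda_n}^a,\chi_{\lambda_p}^a\rangle_{L^2(0,a)}$ satisfies
$$
\lim_{N\to\infty}\sup_{n\geq N}\sum_{\substack{p\geq N\\ p\neq n}}|\Gamma_{n,p}|=0.
$$
Writing $\chi_{\lambda}^a(t)=c_\lambda e^{i\lambda t}$ with $c_\lambda=(2\Im\lambda/(1-e^{-2a\Im\lambda}))^{1/2}$, an explicit computation gives
$$
\Gamma_{n,p}=c_{\lambda_n}c_{\lambda_p}\,\frac{e^{ia(\lambda_n-\overline{\lambda_p})}-1}{i(\lambda_n-\overline{\lambda_p})}.
$$
Condition (i) makes the normalization factors $c_{\lambda_n}$ uniformly bounded (the function $y\mapsto 2y/(1-e^{-2ay})$ is continuous and positive on $(0,\infty)$, with limit $1/a$ at $0^+$), and also bounds $|e^{ia(\lambda_n-\overline{\lambda_p})}-1|\leq 1+e^{a|\Im\lambda_n+\Im\lambda_p|}\leq C_0$. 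Hence there is a constant $C>0$, depending only on $a$ and $\sup_n|\Im\lambda_n|$, such that
$$
|\Gamma_{n,p}|\leq \frac{C}{|\lambda_n-\overline{\lambda_p}|}.
$$

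Next I exploit the geometric growth (ii). Since $|\lambda_{n+1}|>q|\lambda_n|$, iteration gives $|\lambda_p|>q^{p-n}|\lambda_n|$ whenever $p>n$ and $|\lambda_p|\geq q^{p-N}|\lambda_N|$ whenever $p\geq N$. Therefore
$$
|\lambda_n-\overline{\lambda_p}|\geq \bigl||\lambda_n|-|\lambda_p|\bigr|>\bigl(q^{|p-n|}-1\bigr)\,|\lambda_{\min(n,p)}|.
$$
For fixed $n\geq N$, I split the sum into $p>n$ and $N\leq p<n$. The first piece is easy:
$$
\sum_{p>n}|\Gamma_{n,p}|\leq \frac{C}{|\lambda_n|}\sum_{k\geq 1}\frac{1}{q^k-1}\leq \frac{C_1}{|\lambda_N|}.
$$

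The delicate piece, and the main obstacle of the plan, is
$$
\sum_{N\leq p<n}|\Gamma_{n,p}|\leq \frac{C}{|\lambda_N|}\sum_{p=N}^{n-1}\frac{1}{(q^{n-p}-1)\,q^{p-N}},
$$
because the two ``small'' factors depend on $p$ in opposite ways. Setting $m=n-N$ and $k=n-p$, the sum becomes
$$
\frac{1}{q^m}\sum_{k=1}^{m}\frac{q^k}{q^k-1}\leq \frac{q}{q-1}\cdot\frac{m}{q^m},
$$
which is bounded uniformly in $m\geq 1$. Hence $\sum_{N\leq p<n}|\Gamma_{n,p}|\leq C_2/|\lambda_N|$, with $C_2$ independent of $n\geq N$.

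Combining both estimates, $\sup_{n\geq N}\sum_{p\geq N,\,p\neq n}|\Gamma_{n,p}|\leq (C_1+C_2)/|\lambda_N|$. Since (ii) forces $|\lambda_N|\geq q^{N-1}|\lambda_1|\to\infty$, the right side tends to $0$, and Proposition~\ref{cns-general-AOB}(a) delivers the AOS conclusion. I expect no conceptual difficulty beyond the geometric-sum manipulation above; the argument works uniformly for every $a>0$ because the dependence on $a$ enters only through the constant $C$ controlling the normalizations and the exponential factor.
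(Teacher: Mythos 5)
Your proof is correct and follows essentially the same route as the paper: apply Proposition~\ref{cns-general-AOB}(a), bound $|\Gamma_{n,p}|\lesssim |\lambda_n-\overline{\lambda_p}|^{-1}$ using (i), and control the row sums via the geometric growth $|\lambda_p|>q^{|p-n|}|\lambda_{\min(n,p)}|$ to get $\sup_{n\geq N}\sum_{p\neq n}|\Gamma_{n,p}|\lesssim 1/|\lambda_N|\to 0$. The only (immaterial) difference is in the sum over $N\leq p<n$, where the paper simply bounds $q^{n-p}-1\geq q-1$ and uses the geometric decay of $1/|\lambda_p|$, whereas you retain both decays and evaluate the resulting convolution sum $m/q^m$; both give the same uniform bound.
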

\begin{proof}
We apply Proposition~\ref{cns-general-AOB}. Observe that
\begin{eqnarray*}
\Gamma_{n,m} = \langle \chi^a_{\lambda_n},\chi^a_{\lambda_m}\rangle = \left(\frac{4 \Im\lambda_n \Im\lambda_m}{(1- e^{-2a \Im\lambda_n})(1- e^{-2a \Im\lambda_m})}\right)^{1/2} \frac{e^{i(\lambda_n - \overline{\lambda_m})a}-1}{i(\lambda_n - \overline{\lambda_m})}
\end{eqnarray*}
and 
$$
\sup_{n,m\geq 1}\left|\frac{4 \Im\lambda_n \Im\lambda_m}{(1- e^{-2a \Im\lambda_n})(1- e^{-2a \Im\lambda_m})}\right| < \infty,
$$ 
provided $\sup_n\Im \lambda_n < \infty$. If $\Im\lambda_n=0$ (that is $\lambda_n\in\mathbb{R}$), the normalized factor $\Im\lambda_n/(1-e^{-2a\Im\lambda_n})$ should be understood as $a^{-1}$ and corresponds to $\|\chi_{\lambda_n}^a\|^2_{L^2(0,a)}=a$. It follows easily from $(ii)$ that for $m>n$, we have $|\lambda_m|>q^{m-n}|\lambda_n|$. Since $q>1$ that implies that $\lim_{n\to\infty}|\lambda_n|=\infty$. In particular, we can pick an integer $N$ such that for all $n\geq N$, we have $|\lambda_n|\geq 1$. Now, for $n\geq N$, write
\begin{eqnarray*}
\displaystyle\sum_{\substack{m\geq N\\m\neq n}}|\Gamma_{n,m}| \lesssim \displaystyle\sum_{\substack{m\geq N\\m\neq n}} \left|\frac{e^{i(\lambda_n - \overline{\lambda_m})a}-1}{i(\lambda_n - \overline{\lambda_m})}\right| &\lesssim& \displaystyle\sum_{N\leq m<n} \frac{1}{|\lambda_m| \left|\frac{\lambda_n}{\overline{\lambda_m}} -1 \right|} + \displaystyle\sum_{n<m} \frac{1}{|\lambda_n| \left|1- \frac{\overline{\lambda_m}}{\lambda_n} \right|}\\
&\leq& \displaystyle\sum_{N\leq m<n} \frac{1}{|\lambda_m| \left(\left|\frac{\lambda_n}{\lambda_m} \right| -1 \right)} + \displaystyle\sum_{n<m} \frac{1}{|\lambda_n| \left( \left|\frac{\lambda_m}{\lambda_n} \right|-1  \right)} \\
&\leq& \frac{1}{q-1} \displaystyle\sum_{N\leq m<n} \frac{1}{|\lambda_m|}+ \frac{1}{|\lambda_n|}\sum_{n<m}\frac{1}{q^{m-n}|\lambda_n|-1}\\
&\leq& \frac{1}{q-1}\frac{1}{|\lambda_N|}\displaystyle\sum_{N\leq m} \frac{1}{q^{m-N}}+\frac{1}{|\lambda_N|}\sum_{n<m}\frac{1}{q^{m-n}-1}.
\end{eqnarray*}
\noindent Thus, $\displaystyle\sup_{n \geq N} \sum_{\substack{m\geq N\\m\neq n}}|\Gamma_{n,m}| \lesssim \frac{1}{|\lambda_{N}|} \longrightarrow 0$, as $N \longrightarrow \infty$. Proposition \ref{cns-general-AOB} implies now that $(\chi^a_{\lambda_n})_{n\geq 1}$ is an AOS in $L^2(0,a)$.

\end{proof}

\begin{example}\rm{
The sequence $\lambda_n=r^n+i/n$ $,(r>1)$ satisfies the conditions of Proposition~\ref{prop:exponential-sup-egale1} and $\Im\lambda_n\to 0$ as $n$ goes to $\infty$. }
\end{example}

\section{Projecting onto a closed subspace}
Let $b_1,b_2\in H_1^\infty$ such that $b_2|b_1$, in the sense that $b_1=b_2b$ where $b\in H_1^\infty$. In this case, we know that $\HH(b_2)\subset\HH(b_1)$ and more precisely, we have 
\begin{equation}\label{eq:decomposition-non-directe}
\HH(b_1)=\HH(b_2)+b_2\HH(b).
\end{equation}
See \cite[I.10-I.11]{sarason1994sub} or \cite[Section 18.7]{fricain2014theory} for details on this decomposition. 

It should be noted that in general the above decomposition is not orthogonal. However for reproducing kernels, we do have such an orthogonal decomposition. 
\bl\label{Lem:decomposition-orthogonal-kernel}
Let $b_1=b_2b$ with $b_2,b\in H^\infty_1$. Let $\Lambda$ be a finite subset in $\mathbb C_+$. Then for any $a_\lambda\in\mathbb C$, $\lambda\in\Lambda$, we have 
\begin{equation}\label{eq:decomposition-orthogonal-kernel}
\left\|\sum_{\lambda\in\Lambda}a_\lambda k_{\lambda}^{b_1}\right\|_{b_1}^2=\left\|\sum_{\lambda\in\Lambda}a_\lambda k_{\lambda}^{b_2}\right\|_{b_2}^2+\left\|\sum_{\lambda\in\Lambda}a_\lambda \overline{b_2(\lambda)}k_{\lambda}^{b}\right\|_{b}^2
\end{equation}
\el
\begin{proof}
First note that
\begin{equation}\label{eq:decomposition-kernel}
k_{\lambda}^{b_1}=k_{\lambda}^{b_2}+b_2\overline{b_2(\lambda)}k_{\lambda}^b.
\end{equation}
Now if $LH$ and $RH$ denotes the left hand-side and right hand-side of \eqref{eq:decomposition-orthogonal-kernel}, we have
$$
LH=\sum_{\lambda,\mu\in\Lambda}a_\lambda \overline{a_\mu}k_\lambda^{b_1}(\mu),
$$
and 
$$
RH=\sum_{\lambda,\mu\in\Lambda}a_\lambda \overline{a_\mu}k_\lambda^{b_2}(\mu)+\sum_{\lambda,\mu\in\Lambda}a_\lambda \overline{a_\mu}\overline{b_2(\lambda)}b_2(\mu) k_\lambda^{b}(\mu).
$$
It remains to use \eqref{eq:decomposition-kernel} to get \eqref{eq:decomposition-orthogonal-kernel}.

\end{proof}

Let $(\lambda_n)_{n\geq 1}\subset\mathbb C_+$ and assume that $(\kappa_{\lambda_n}^{b_1})_{n\geq 1}$ is a complete AOB in $\HH(b_1)$. It is very natural to ask if the sequence $(\kappa_{\lambda_n}^{b_2})_{n\geq 1}$ remains an AOB in $\HH(b_2)$. The anwer depends on the following ratio:
$$\RR_{b_1,b_2}(n) := \frac{\|k^{b_1}_{\lambda_n}\|_{b_1}^2}{\|k^{b_2}_{\lambda_n}\|_{b_2}^2} =\frac{1-|b_1(\lambda_n)|^2}{1- |b_2(\lambda_n)|^2}.
$$
The following result says that if the behavior of 
$b_1(\lambda_n)$ and $b_2(\lambda_n)$ are comparable as $n \rightarrow \infty$, then we can transfer AOBs between the respective de Branges--Rovnyak spaces. 
\begin{theorem}\label{Thm:cns-division}
Let $b_1=b_2b$, $b_2,b\in H^\infty_1$, let $(\lambda_n)_{n\geq 1}\subset\mathbb C_+$ satisfying 
$$
\sum_n\left|\RR_{b_1,b_2}(n)-1\right|<\infty.
$$
If the sequence $(\kappa_{\lambda_n}^{b_1})_{n\geq 1}$ is a complete AOB in $\HH(b_1)$, then there is an integer $p\geq 1$ such that $(\kappa_{\lambda_n}^{b_2})_{n\geq p}$ is a complete AOB in $\HH(b_2)$. Conversely, if $(\kappa_{\lambda_n}^{b_2})_{n\geq 1}$ is an AOB in $\HH(b_2)$, then $(\kappa_{\lambda_n}^{b_1})_{n\geq 1}$ is an AOB in $\HH(b_1)$. 
\end{theorem}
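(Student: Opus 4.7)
The plan is to use Lemma \ref{Lem:decomposition-orthogonal-kernel} as the backbone and to normalize it so that the three norms involve the normalized kernels $\kappa_{\lambda_n}^{b_1}, \kappa_{\lambda_n}^{b_2}, \kappa_{\lambda_n}^{b}$ directly. Setting $r_n := \RR_{b_1,b_2}(n)$ and noting that $r_n \geq 1$ (since $b_2 \mid b_1$ forces $|b_1| \leq |b_2|$), the algebraic identity $1 - |b_1|^2 = (1-|b_2|^2) + |b_2|^2(1-|b|^2)$ applied at $\lambda_n$ will yield the key identity
\[
|b_2(\lambda_n)|^2 \, \frac{\|k_{\lambda_n}^b\|_b^2}{\|k_{\lambda_n}^{b_1}\|_{b_1}^2} = 1 - r_n^{-1}.
\]
Dividing Lemma \ref{Lem:decomposition-orthogonal-kernel} by the norms $\|k_{\lambda_n}^{b_1}\|_{b_1}$ and introducing the unit scalar $\beta_n := \overline{b_2(\lambda_n)}/|b_2(\lambda_n)|$ (arbitrary when $b_2(\lambda_n)=0$), this produces the master identity
\[
\Bigl\|\sum_n a_n \kappa_{\lambda_n}^{b_1}\Bigr\|_{b_1}^2 = \Bigl\|\sum_n \tfrac{a_n}{\sqrt{r_n}}\kappa_{\lambda_n}^{b_2}\Bigr\|_{b_2}^2 + \Bigl\|\sum_n a_n \beta_n \sqrt{1-r_n^{-1}}\,\kappa_{\lambda_n}^b\Bigr\|_b^2.
\]
The cross term will be bounded by the triangle inequality (using $\|\kappa_{\lambda_n}^b\|_b = 1$) and Cauchy--Schwarz: its square is at most $\eta_N \sum_{n\geq N}|a_n|^2$ with $\eta_N := \sum_{n\geq N}(1-r_n^{-1})$. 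Since $0 \leq 1 - r_n^{-1} \leq |r_n - 1|$, the summability hypothesis forces $\eta_N \to 0$ and also $\delta_N := \sup_{n \geq N}|r_n - 1| \to 0$.

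For the forward direction, combining the AOB inequalities for $(\kappa_{\lambda_n}^{b_1})$ with the master identity and the cross-term bound gives
\[
(c_N^{(1)} - \eta_N)\sum_{n\geq N}|a_n|^2 \;\leq\; \Bigl\|\sum_{n \geq N} \tfrac{a_n}{\sqrt{r_n}}\kappa_{\lambda_n}^{b_2}\Bigr\|_{b_2}^2 \;\leq\; C_N^{(1)} \sum_{n\geq N}|a_n|^2.
\]
The change of variable $b_n = a_n/\sqrt{r_n}$ (so $|a_n|^2 = r_n |b_n|^2$) combined with $r_n \in [1, 1+\delta_N]$ on the tail then converts this into AOS bounds for $(\kappa_{\lambda_n}^{b_2})_{n \geq 1}$ whose constants both tend to $1$. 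Choosing $p$ as the least integer for which $c_p^{(1)} > \eta_p$ makes the reindexed tail $(\kappa_{\lambda_n}^{b_2})_{n \geq p}$ an AOB. For the completeness of the full sequence in $\HH(b_2)$, I will exploit the inclusion $\HH(b_2) \subset \HH(b_1)$: any $f \in \HH(b_2)$ with $\langle f, k_{\lambda_n}^{b_2}\rangle_{b_2} = 0$ for all $n$ satisfies $\langle f, k_{\lambda_n}^{b_1}\rangle_{b_1} = f(\lambda_n) = 0$ for all $n$, hence $f = 0$ by completeness of $(\kappa_{\lambda_n}^{b_1})$ in $\HH(b_1)$.

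The converse runs symmetrically. From the master identity, dropping the nonnegative second summand on one side and inserting the cross-term bound on the other gives
\[
\Bigl\|\sum_{n\geq N}\tfrac{a_n}{\sqrt{r_n}}\kappa_{\lambda_n}^{b_2}\Bigr\|_{b_2}^2 \;\leq\; \Bigl\|\sum_{n\geq N} a_n \kappa_{\lambda_n}^{b_1}\Bigr\|_{b_1}^2 \;\leq\; \Bigl\|\sum_{n\geq N}\tfrac{a_n}{\sqrt{r_n}}\kappa_{\lambda_n}^{b_2}\Bigr\|_{b_2}^2 + \eta_N\sum_{n\geq N}|a_n|^2.
\]
Applying the AOB bounds for $(\kappa_{\lambda_n}^{b_2})$ to the middle term together with $r_n \in [1,1+\delta_N]$ then yields AOB bounds for $(\kappa_{\lambda_n}^{b_1})$ with constants tending to $1$.

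The main obstacle I anticipate is reconciling the AOS threshold $p$ with the completeness claim in the forward direction. The argument above secures completeness only for the \emph{full} sequence $(\kappa_{\lambda_n}^{b_2})_{n\geq 1}$; the tail $(\kappa_{\lambda_n}^{b_2})_{n\geq p}$ spans a closed subspace of $\HH(b_2)$ whose orthogonal complement has dimension at most $p-1$, since any $f$ in it satisfies $f(\lambda_n)=0$ for $n\geq p$ and, viewed inside $\HH(b_1)$, lies in the $(p-1)$-dimensional span of the biorthogonals $(k_{\lambda_n}^{b_1})^*$ for $n<p$. Proving that this finite-dimensional obstruction intersects $\HH(b_2)$ trivially (perhaps by choosing $p$ carefully or by a direct argument on the biorthogonal functions) is the delicate step where extra work will be required.
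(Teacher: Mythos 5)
Your normalized ``master identity'' is a correct repackaging of Lemma~\ref{Lem:decomposition-orthogonal-kernel} (the paper uses the same decomposition, only dividing through by $\|k^{b_2}_{\lambda_n}\|_{b_2}$ rather than $\|k^{b_1}_{\lambda_n}\|_{b_1}$), and your handling of the cross term via $0\leq 1-r_n^{-1}\leq r_n-1$, the change of variables $b_n=a_n/\sqrt{r_n}$, the completeness of the full sequence $(\kappa^{b_2}_{\lambda_n})_{n\geq 1}$ in $\HH(b_2)$, and the converse direction all go through. Indeed your converse is marginally cleaner than the paper's: the lower bound at $N=1$ already yields the Riesz-sequence estimate for $(\kappa^{b_1}_{\lambda_n})_{n\geq 1}$, whereas the paper proves the AOS property and then checks minimality separately by pushing the biorthogonal functions of $(k^{b_2}_{\lambda_n})_n$ from $\HH(b_2)$ into $\HH(b_1)$.

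The genuine gap is precisely the one you flag in your last paragraph and leave open: you obtain that $(\kappa^{b_2}_{\lambda_n})_{n\geq 1}$ is a complete AOS and that \emph{some} tail is an AOB, but the theorem claims some tail is a \emph{complete} AOB, and your finite-codimension remark does not finish this. The paper's device is the following. Take $p$ to be the \emph{least} integer such that $(\kappa^{b_2}_{\lambda_n})_{n\geq p}$ is an AOB (not the least $p$ with $c_p^{(1)}>\eta_p$). If $p=1$, your completeness argument concludes. If $p>1$ and the tail at $p$ were not complete in $\HH(b_2)$, then Lemma~\ref{Lem:add-minimality}(b) shows that adjoining $k^{b_2}_{\lambda_{p-1}}$ to $\{k^{b_2}_{\lambda_n}\}_{n\geq p}$ produces a minimal system; since $(\kappa^{b_2}_{\lambda_n})_{n\geq p-1}$ is also an AOS (a tail of your AOS) and a minimal AOS is an AOB, this contradicts the minimality of $p$. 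Hence the tail at $p$ must be complete. Without this step (or an equivalent substitute) your argument proves a strictly weaker conclusion than the theorem in the forward direction.
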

\begin{proof}
First note that $(k_{\lambda_n}^{b_2})_{n\geq 1}$ is complete in $\HH(b_2)$. Indeed, let $f\in\HH(b_2)$, $f\perp k_{\lambda_n}^{b_2}$, $n\geq 1$. Since $\HH(b_2)\subset\HH(b_1)$, we can write 
$$
0=\langle f,k_{\lambda_n}^{b_2} \rangle_{b_2}=f(\lambda_n)=\langle f,k_{\lambda_n}^{b_1} \rangle_{b_1}.
$$
Thus $f$ is orthogonal to $k_{\lambda_n}^{b_1}$, $n\geq 1$ and the completeness of $(k_{\lambda_n}^{b_1})_{n\geq 1}$ in $\HH(b_1)$ implies that $f\equiv 0$. 

Since $(\kappa_{\lambda_n}^{b_1})_{n\geq 1}$ is an AOB in $\HH(b_1)$, 
given any $\epsilon > 0$, there exists $N \in \mathbb N$ such that 
\begin{equation}
\label{proj1} (1- \epsilon) \sum_{n \geq N} |a_n|^2 \leq  \left\|\displaystyle\sum_{n \geq N} a_n \kappa^{b_1}_{\lambda_n}\right\|_{b_1} \leq (1 + \epsilon)\sum_{n \geq N} |a_n|^2.
\end{equation}
Moreover, since $\RR_{b_1,b_2}(n) -1 \in \ell ^1$, we can also assume that $N$ satisfies 
\begin{equation} \label{proj3}
\displaystyle\sum_{n \geq N} \left(\frac{\|k^{b_1}_{\lambda_n}\|_{b_1}^2}{\|k^{b_2}_{\lambda_n}\|_{b_2}^2} - 1 \right) < \epsilon.
\end{equation}
In particular, this guarantees that
\begin{equation}
\label{proj2} 1- \epsilon < \frac{\|k^{b_1}_{\lambda_n}\|_{b_1}^2}{\|k^{b_2}_{\lambda_n}\|_{b_2}^2} < 1+\epsilon.
\end{equation}
We now prove that $\{\kappa^{b_2}_{\lambda_n}\}$ is an AOS in $\HH(b_2)$. Using Lemma~\ref{Lem:decomposition-orthogonal-kernel}, we have
$$
\left\|\displaystyle\sum_{n \geq N} a_n \frac{k^{b_1}_{\lambda_n}}{\|k^{b_2}_{\lambda_n}\|_{b_2}}\right\|^2_{b_1} = \left\|\displaystyle\sum_{n \geq N}a_n \frac{ k^{b_2}_{\lambda_n}}{\|k^{b_2}_{\lambda_n}\|_{b_2}} \right\|_{b_2}^2 + \left\|\displaystyle\sum_{n \geq N}a_n \frac{ \overline{b_2(\lambda_n)}k^{b}_{\lambda_n}}{\|k^{b_2}_{\lambda_n}\|_{b_2}}\right\|_b^2.
$$
Thus
\begin{eqnarray*}
\left\|\displaystyle\sum_{n \geq N}a_n \kappa^{b_2}_{\lambda_n} \right\|_{b_2}^2 &=& \left\|\displaystyle\sum_{n \geq N} a_n \frac{\|k^{b_1}_{\lambda_n}\|_{b_1}}{\|k^{b_2}_{\lambda_n}\|_{b_2}} \kappa^{b_1}_{\lambda_n}\right\|_{b_1}^2 - \left\|\displaystyle\sum_{n \geq N} a_n \frac{ \overline{b_2(\lambda_n)} k^{b}_{\lambda_n}}{\|k^{b_2}_{\lambda_n}\|_{b_2}}\right\|_b^2.\\
&=& I_1 - I_2.
\end{eqnarray*}
For $I_1$, we use relationships \eqref{proj1} and \eqref{proj2} to get
\begin{eqnarray*}
(1-\epsilon) ^2 \displaystyle\sum_{n \geq N} |a_n|^2 \leq (1-\epsilon)\displaystyle\sum_{n \geq N} |a_n|^2  \frac{\|k^{b_1}_{\lambda_n}\|_{b_1}^2}{\|k^{b_2}_{\lambda_n}\|_{b_2}^2}  &\leq& \left\|\displaystyle\sum_{n \geq N} a_n \frac{\|k^{b_1}_{\lambda_n}\|_{b_1}}{\|k^{b_2}_{\lambda_n}\|_{b_2}}\kappa^{b_1}_{\lambda_n}\right\|_{b_1}^2 \\  &\leq &(1+ \epsilon)\displaystyle\sum_{n \geq N} |a_n|^2 \frac{\|k^{b_1}_{\lambda_n}\|_{b_1}^2}{\|k^{b_2}_{\lambda_n}\|_{b_2}^2} \leq (1+\epsilon)^2 \displaystyle\sum_{n \geq N} |a_n|^2.
\end{eqnarray*}
For $I_2$, we use \eqref{eq:decomposition-orthogonal-kernel}, \eqref{proj3} and Cauchy--Schwarz inequality to obtain
\begin{eqnarray*}
\left\|\displaystyle\sum_{n \geq N} a_n \frac{\overline{b_2(\lambda_n)}k^{b}_{\lambda_n}}{\|k^{b_2}_{\lambda_n}\|_{b_2}}\right\|_b^2 &\leq& \left(\displaystyle\sum_{n\geq N} |a_n|^2\right) \left(\displaystyle\sum_{n \geq N} \frac{\| \overline{b_2(\lambda_n)}  k^{b}_{\lambda_n}\|_b^2}{\|k^{b_2}_{\lambda_n}\|_{b_2}^2}\right)\\ 
&=& \left(\displaystyle\sum_{n\geq N} |a_n|^2\right)
\displaystyle\sum_{n \geq N} \left( \frac{\|k^{b_1}_{\lambda_n} \|_{b_1}^2}{\|k^{b_2}_{\lambda_n}\|_{b_2}^2} -1\right) \\
&\leq& \epsilon \displaystyle\sum_{n\geq N} |a_n|^2
\end{eqnarray*}
It follows that $(\kappa^{b_2}_{\lambda_n})_n$ is an AOS. Now let $p$ be the smallest integer such that $(\kappa_{\lambda_n}^{b_2})_{n\geq p}$ is an AOB in $\HH(b_2)$. If $p=1$, then since $(\kappa_{\lambda_n}^{b_2})_{n\geq 1}$ is complete in $\HH(b_2)$, we have the result. Otherwise combining Lemma~\ref{Lem:add-minimality} (b) and the fact that a sequence is an AOB if and only if it is a minimal AOS, we conclude that$(\kappa_{\lambda_n}^{b_2})_{n\geq p}$ is a complete AOB in $\HH(b_2)$.\\


Conversely, suppose $(\kappa^{b_2}_{\lambda_n})_{n\geq 1}$ is an AOB in $\HH(b_2)$. We note that $(\RR_{b_2,b_1}(n)-1)_n=(1/\RR_{b_1,b_2}(n) -1)_n \in \ell^1$. Using similar computations as before, we easily see that $(\kappa^{b_1}_{\lambda_n})_n$ is an AOS in $\HH(b_1)$. It remains to check the minimality of $(k^{b_1}_{\lambda_n})_{n\geq 1}$. But, since $(k^{b_2}_{\lambda_n})_{n\geq 1}$ is minimal in $\HH(b_2)$, there exists a sequence of functions $\psi_n\in \HH(b_2)$ such that 
$$
\langle \psi_n,k_{\lambda_\ell}^{b_2}\rangle_{b_2}=\delta_{n,\ell}.
$$
Remind now that $\HH(b_2)\subset\HH(b_1)$, whence
$$
\langle \psi_n,k_{\lambda_\ell}^{b_1} \rangle_{b_1}=\psi_n(\lambda_\ell)=\langle \psi_n,k_{\lambda_\ell}^{b_2}\rangle_{b_2}=\delta_{n,\ell},
$$
which proves that $(k^{b_1}_{\lambda_n})_{n\geq 1}$ is a minimal sequence in $\HH(b_1)$. 
\end{proof}

\begin{corollary}
Let $b_1$ and $b_2$ be two functions in $H^\infty_1$ such that they have a common factor $b$, i.e. both $b_1/b$ and $b_2/b$ are in $H^\infty_1$. Moreover, suppose that $(\RR_{b_1,b}(n)-1)_n \in \ell^1$ and $(\RR_{b_2,b}(n) - 1)_n \in \ell^1$. If $(\kappa^{b_1}_{\lambda_n})_{n\geq 1}$ is an AOB in $\HH(b_1)$, then there is an integer $p\geq 1$ such that $(\kappa^{b_2}_{\lambda_n})_{n\geq p}$ is an AOB in $\HH(b_2)$.
\end{corollary}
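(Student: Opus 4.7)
The strategy is to apply Theorem~\ref{Thm:cns-division} twice, using $\HH(b)$ as a bridge between $\HH(b_1)$ and $\HH(b_2)$. Writing $b_1=b\cdot(b_1/b)$ and $b_2=b\cdot(b_2/b)$ with $b_1/b, b_2/b\in H^\infty_1$, we obtain the inclusions $\HH(b)\subset\HH(b_1)$ and $\HH(b)\subset\HH(b_2)$, so that $\HH(b)$ sits inside both ambient spaces.

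First, I would apply the forward direction of Theorem~\ref{Thm:cns-division} to the factorization $b_1=b\cdot(b_1/b)$. Since $(\RR_{b_1,b}(n)-1)_n\in\ell^1$ by hypothesis and $(\kappa_{\lambda_n}^{b_1})_{n\geq 1}$ is a complete AOB in $\HH(b_1)$, the theorem produces an integer $p\geq 1$ such that $(\kappa_{\lambda_n}^{b})_{n\geq p}$ is a complete AOB in $\HH(b)$.

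Next, I would apply the converse direction of Theorem~\ref{Thm:cns-division} to the factorization $b_2=b\cdot(b_2/b)$, which is precisely the setting where the hypothesis $(\RR_{b_2,b}(n)-1)_n\in\ell^1$ enters. The AOS estimates in the converse half of the proof of Theorem~\ref{Thm:cns-division} are carried out termwise using Lemma~\ref{Lem:decomposition-orthogonal-kernel} on a tail of the sequence, so they transfer verbatim from index~$1$ to index~$p$. This yields that $(\kappa_{\lambda_n}^{b_2})_{n\geq p}$ is an AOS in $\HH(b_2)$.

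It remains to upgrade AOS to AOB, i.e.\ to establish the minimality of $(k_{\lambda_n}^{b_2})_{n\geq p}$ in $\HH(b_2)$. Since $(k_{\lambda_n}^{b})_{n\geq p}$ is a (complete and) minimal sequence in $\HH(b)$, there exist biorthogonal functions $\psi_n\in\HH(b)$ satisfying $\psi_n(\lambda_\ell)=\langle\psi_n,k_{\lambda_\ell}^{b}\rangle_b=\delta_{n,\ell}$. Because $\HH(b)\subset\HH(b_2)$, each $\psi_n$ also belongs to $\HH(b_2)$ and one computes $\langle\psi_n,k_{\lambda_\ell}^{b_2}\rangle_{b_2}=\psi_n(\lambda_\ell)=\delta_{n,\ell}$, which gives the required minimality, exactly as in the minimality argument at the end of the proof of Theorem~\ref{Thm:cns-division}. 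The one point that requires care, and the main potential obstacle, is to confirm that the converse direction of Theorem~\ref{Thm:cns-division} really is insensitive to the starting index of the sequence; this is however essentially immediate once one notices that every estimate in that portion of the proof is a tail inequality, so no separate argument for $n<p$ is needed.
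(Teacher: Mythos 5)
Your proof is correct and follows the intended route: the paper states this corollary without proof, and the evident argument is precisely your two-step passage through the intermediate space $\HH(b)$ --- the forward direction of Theorem~\ref{Thm:cns-division} applied to $b_1=b\cdot(b_1/b)$ to get a tail $(\kappa_{\lambda_n}^{b})_{n\geq p}$ that is an AOB in $\HH(b)$, then the converse direction applied to $b_2=b\cdot(b_2/b)$, noting that its estimates are tail inequalities and that minimality of $(k_{\lambda_n}^{b_2})_{n\geq p}$ follows from the biorthogonal family of $(k_{\lambda_n}^{b})_{n\geq p}$ living in $\HH(b)\subset\HH(b_2)$. The only small mismatch is that you invoke the forward direction under the hypothesis that $(\kappa_{\lambda_n}^{b_1})_{n\geq 1}$ is a \emph{complete} AOB whereas the corollary assumes only an AOB; this is harmless (the AOS estimate in that part of the theorem's proof does not use completeness, and passing from an AOS to an AOB on a tail is automatic), but it deserves a word.
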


The assumption that $(\RR_{b_1,b_2}(n)-1)_n\in \ell^1$ may appear very restrictive. However, as the following result shows, in some particular case, it is indeed also necessary. 

\begin{corollary}
Let $b_1=\Theta_2 b$ where $b\in H_1^\infty$ and $\Theta_2$ is an inner function such that $\infty\notin\sigma(\Theta_2)$. Let $(\lambda_n)_{n\geq 1}$ be a sequence of points in $\mathbb C_+ $ such that $(\kappa^{b_1}_{\lambda_n})_{n\geq 1}$ is a complete AOB in $\HH(b_1)$ and
\begin{equation}\label{eq:norme-bounded}
\sup_{n\geq 1}\|k_{\lambda_n}^b\|_b<\infty.
\end{equation} 
Then the following are equivalent:
\begin{enumerate}
\item[$(1)$] There is an integer $p\geq 1$ such that $(\kappa^{\Theta_2}_{\lambda_n})_{n\geq p}$ is a complete AOB in $K_{\Theta_2}$. 
\item[$(2)$] $(\RR_{b_1,\Theta_2}(n) - 1)_n \in \ell^1$.
\end{enumerate}
\end{corollary}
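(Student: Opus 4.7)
The implication $(2)\Rightarrow(1)$ is an immediate application of Theorem~\ref{Thm:cns-division} to the factorisation $b_1=\Theta_2\,b$ (the quotient $b$ lies in $H^\infty_1$ by hypothesis): its $\ell^1$-hypothesis is precisely $(2)$, and its conclusion is precisely $(1)$.

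For the converse $(1)\Rightarrow(2)$, the starting point is the kernel decomposition $k_\lambda^{b_1}=k_\lambda^{\Theta_2}+\overline{\Theta_2(\lambda)}\,\Theta_2\,k_\lambda^b$ from \eqref{eq:decomposition-kernel}, whose diagonal version reads
\[
\RR_{b_1,\Theta_2}(n)-1=\frac{|\Theta_2(\lambda_n)|^2\,\|k_{\lambda_n}^b\|_b^2}{\|k_{\lambda_n}^{\Theta_2}\|_{\Theta_2}^2}.
\]
Combined with the standing assumption $\sup_n\|k_{\lambda_n}^b\|_b<\infty$ this reduces $(2)$ to proving the summability
\[
\sum_{n\geq 1}\frac{|\Theta_2(\lambda_n)|^2}{\|k_{\lambda_n}^{\Theta_2}\|_{\Theta_2}^2}<\infty.
\]
The Riesz basis property of $(\kappa_{\lambda_n}^{\Theta_2})_{n\geq p}$ (which follows from the AOB assumption via Proposition~\ref{Bari-AOB}) supplies, for every $F\in K_{\Theta_2}$, the Bessel bound
\[
\sum_{n\geq p}\frac{|F(\lambda_n)|^2}{\|k_{\lambda_n}^{\Theta_2}\|_{\Theta_2}^2}=\sum_{n\geq p}|\langle F,\kappa_{\lambda_n}^{\Theta_2}\rangle_{\Theta_2}|^2\leq C\,\|F\|_{\Theta_2}^2,
\]
so the summability above will follow once one produces $F\in K_{\Theta_2}$ with $|F(\lambda_n)|\gtrsim |\Theta_2(\lambda_n)|$ for all $n\geq p$.

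The plan for this construction exploits $\infty\notin\sigma(\Theta_2)$, which guarantees that the nontangential limit $\alpha:=\Theta_2(\infty)$ exists with $|\alpha|=1$ and that $\Theta_2(z)-\alpha=O(1/|z|)$ as $|z|\to\infty$. A natural candidate is $F_0(z):=(\Theta_2(z)-\alpha)\varphi(z)$ with $\varphi$ a bounded rational weight chosen so that $F_0\in H^2$ and the conjugate-pole condition $\Theta_2\overline{F_0}\in H^2$ holds, placing $F_0$ in $K_{\Theta_2}$. Applying the Bessel bound to $F_0$ yields summability of $|\Theta_2(\lambda_n)-\alpha|^2|\varphi(\lambda_n)|^2/\|k_{\lambda_n}^{\Theta_2}\|^2$, and combining this with the complementary bound for the "constant-$\alpha$ part" obtained from the AOB of $(\kappa_{\lambda_n}^{b_1})$ in $\HH(b_1)$ applied via Lemma~\ref{Lem:decomposition-orthogonal-kernel} to a convenient test function, together with the elementary decomposition $|\Theta_2|^2\leq 2|\Theta_2-\alpha|^2+2|\alpha|^2$, will give the required conclusion. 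The decay $\|v_n\|_b\to 0$, where $v_n:=\overline{\Theta_2(\lambda_n)}k_{\lambda_n}^b/\|k_{\lambda_n}^{b_1}\|_{b_1}$, is a consequence of $\sup_n\|k_{\lambda_n}^b\|_b<\infty$ combined with $\infty\notin\sigma(\Theta_2)$ via the norm identity in Lemma~\ref{Lem:decomposition-orthogonal-kernel}, and is used to absorb the remaining terms into the convergent series. The main obstacle is the careful choice of the weight $\varphi$ together with the residue analysis needed to ensure $F_0\in K_{\Theta_2}$ with $\|F_0\|_{\Theta_2}$ controlled; this subtlety is essential, because the prototype case $\Theta_2(z)=e^{iaz}$ (for which $\infty\in\sigma(\Theta_2)$) admits sequences where $(1)$ holds yet $(2)$ fails, confirming that the full strength of $\infty\notin\sigma(\Theta_2)$ is needed.
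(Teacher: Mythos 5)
Your direction $(2)\Rightarrow(1)$ is exactly the paper's, and your reduction of $(1)\Rightarrow(2)$ is sound up to a point: the identity $\RR_{b_1,\Theta_2}(n)-1=|\Theta_2(\lambda_n)|^2\|k^b_{\lambda_n}\|_b^2/\|k^{\Theta_2}_{\lambda_n}\|_{\Theta_2}^2$ is correct, and together with \eqref{eq:norme-bounded} it does reduce everything to extracting the summability of $|\Theta_2(\lambda_n)|^2/\|k^{\Theta_2}_{\lambda_n}\|_{\Theta_2}^2$ from a Bessel bound tested on one well-chosen function. But the heart of the converse --- producing that function --- is where your argument has genuine gaps. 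First, the hypothesis $\infty\notin\sigma(\Theta_2)$ is used in the paper not to expand $\Theta_2$ at infinity but to conclude, via \cite[Lemma 4.4]{baranov2005stability}, that $\sup_n|\lambda_n|<\infty$; you never establish this, yet without it no rational (hence decaying) weight $\varphi$ satisfies $|\varphi(\lambda_n)|\gtrsim 1$ and no lower bound of the form $|F(\lambda_n)|\gtrsim|\Theta_2(\lambda_n)|$ is available. Second, your candidate $F_0=(\Theta_2-\alpha)\varphi$ with $\alpha=\Theta_2(\infty)$ is defective: if $\varphi$ is rational with a pole at $\overline\gamma$, $\gamma\in\mathbb C_+$, then $\Theta_2\overline{F_0}=(1-\overline{\alpha}\Theta_2)\overline{\varphi}$ has a genuine pole at $\gamma$, since $|\Theta_2(\gamma)|<1=|\alpha|$ forces $1-\overline{\alpha}\Theta_2(\gamma)\neq 0$; so $F_0\notin K_{\Theta_2}$ as described. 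Third, and most importantly, even granting $F_0$ it only controls $|\Theta_2(\lambda_n)-\alpha|^2$, and the ``complementary bound'' $\sum_n\|k^{\Theta_2}_{\lambda_n}\|_{\Theta_2}^{-2}<\infty$ that you need for the constant part is, since $|\Theta_2|\le 1$, already the whole statement; you merely assert that it will follow from ``a convenient test function.''

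For comparison, the paper closes all of this at once: it takes $f=\Theta_2\,k^b_\gamma\in\Theta_2\HH(b)\subset\HH(b_1)$ with $|\gamma|$ large, applies the Bessel bound of the AOB $(\kappa^{b_1}_{\lambda_n})_{n\geq 1}$ in $\HH(b_1)$ (not in $K_{\Theta_2}$), and uses $\sup_n|\lambda_n|<\infty$ together with $|1-\overline{b(\gamma)}b(\lambda_n)|\geq 1-|b(\gamma)|$ to get $|f(\lambda_n)|\gtrsim|\Theta_2(\lambda_n)|$, whence $\sum_n|\Theta_2(\lambda_n)|^2\,\Im\lambda_n/(1-|b_1(\lambda_n)|^2)<\infty$, and then $(2)$ after invoking \eqref{eq:norme-bounded}. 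If you prefer to stay inside $K_{\Theta_2}$ as your plan does, the same device works with $F=k^{\Theta_2}_\gamma$ for one fixed $\gamma$: then $|F(\lambda_n)|\geq(1-|\Theta_2(\gamma)|)/(2\pi|\lambda_n-\overline\gamma|)\gtrsim 1$ once $\sup_n|\lambda_n|<\infty$ is known, and the Bessel bound for $(\kappa^{\Theta_2}_{\lambda_n})_{n\geq p}$ yields directly $\sum_n\|k^{\Theta_2}_{\lambda_n}\|_{\Theta_2}^{-2}<\infty$, making the whole $F_0$ construction unnecessary. Either way, the two missing ingredients in your write-up are the boundedness of $(\lambda_n)$ and a test function that honestly lies in the space with an honest lower bound at the $\lambda_n$.
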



\begin{proof}
$(2) \implies (1)$  follows from Theorem~\ref{Thm:cns-division}.

\noindent  $(1) \implies (2)$ We recall a well known fact (see \cite[Lemma 4.4]{baranov2005stability}) that $\sup_{n} |\lambda_n| < \infty$, provided $\infty \not\in \sigma(\Theta_2)$ and $(\kappa^{\Theta_2}_{\lambda_n})_{n\geq p}$ is an AOB in $K_{\Theta_2}$ (in fact, it is sufficient that $(\kappa^{\Theta_2}_{\lambda_n})_n$ to be a frame). 

Let $\gamma \in \mathbb C_+$. Then, the function 
$$ f(z):= \Theta_2(z) \frac{1-\overline{b(\gamma)}b(z)}{z-\overline \gamma} \in \Theta_2 \HH(b) \subset K_{\Theta_2} + \Theta_2 \HH(b) =\HH(b_1).$$
Since $(\kappa^{b_1}_{\lambda_n})_n$ is an AOB in $\HH(b_1)$, it must be the case that 
\begin{eqnarray*}
&\displaystyle\sum_{n \geq 1}& |\langle f, \kappa^{b_1}_{\lambda_n}\rangle|^2 < \infty \\
\mbox{i.e. } \hspace{20 pt}&\displaystyle\sum_{n \geq 1}& |\Theta_2(\lambda_n)|^2 \left|\frac{1-\overline{b(\gamma)}b(\lambda_n)}{\lambda_n-\overline \gamma}\right|^2 \frac{2\Im \lambda_n}{1-|b_1(\lambda_n)|^2} < \infty.
\end{eqnarray*}
We observe that, since $\sup_n |\lambda_n|<\infty$, when $|\gamma|$ is large enough, we have
\begin{equation*}
\left|\frac{1-\overline{b(\gamma)}b(\lambda_n)}{\lambda_n-\overline \gamma}\right| \gtrsim \frac{1-|b(\gamma)|}{|\gamma|}.
\end{equation*}
Thus, 
$$ \displaystyle\sum_{n \geq 1} |\Theta_2(\lambda_n)|^2  \frac{2\Im \lambda_n}{1-|b_1(\lambda_n)|^2} <\infty.$$
Since $1-|b(\lambda_n)|^2\lesssim \Im\lambda_n$, we have
$$ \displaystyle\sum_{n \geq 1} |\Theta_2(\lambda_n)|^2  \frac{1-|b(\lambda_n)|^2}{1-|b_1(\lambda_n)|^2} < \infty.$$
i.e
$$\displaystyle\sum_{n \geq 1} |\Theta_2(\lambda_n)|^2  \frac{\|k^b_{\lambda_n}\|_b^2}{\|k^{b_1}_{\lambda_n}\|_{b_1}^2} < \infty. $$
Thus, we have finally, $$\displaystyle\sum_{n \geq 1}(1 - \RR_{\Theta_2,b_1}(n))= \displaystyle\sum_{n \geq 1} \frac{\|k^{b_1}_{\lambda_n}\|_{b_1}^2-\|k^{\Theta_2}_{\lambda_n}\|_{2}^2}{\|k^{b_1}_{\lambda_n}\|_{b_1}^2} < \infty. $$
In other words, $(\RR_{\Theta_2,b_1}(n) - 1)_n \in \ell^1.$ Since $\RR_{b_1,\Theta_2}(n) = 1/ \RR_{\Theta_2,b_1}(n)$, it follows that $(\RR_{b_1,\Theta_2}(n) -1)_n \in \ell^1$.

\end{proof}

\begin{example}\rm{
Note that \eqref{eq:norme-bounded} is, in particular, satisfied in the case when $b=\Theta$ is an inner function satisfying $\Theta'\in L^\infty(\mathbb R)$. Indeed, as was shown in \cite[Corollary 4.7]{baranov2005bernstein}, we have
$$
\|k_{\lambda_n}^\Theta\|_2\leq \|k_{x_n}^\Theta\|_2=|\Theta'(x_n)|^{1/2},
$$
where $x_n=\Re\lambda_n$. }
\end{example}

\begin{remark}\rm{
The results given in that section can also be proved when $b_1=\Theta_1$ is an inner function and the frequencies $(\lambda_n)_{n\geq 1}$ belong to $\mathbb{C}_+\cup\mathbb R\setminus\sigma(\Theta)$. }
\end{remark}
\nopagebreak 

\end{document}